\newrobustcmd{\MakeTitleCase}[1]{%
	\ifthenelse{\ifcurrentfield{booktitle}\OR\ifcurrentfield{booksubtitle}%
		\OR\ifcurrentfield{maintitle}\OR\ifcurrentfield{mainsubtitle}%
		\OR\ifcurrentfield{journaltitle}\OR\ifcurrentfield{journalsubtitle}%
		\OR\ifcurrentfield{issuetitle}\OR\ifcurrentfield{issuesubtitle}%
		\OR\ifentrytype{book}\OR\ifentrytype{mvbook}\OR\ifentrytype{bookinbook}%
		\OR\ifentrytype{booklet}\OR\ifentrytype{suppbook}%
		\OR\ifentrytype{collection}\OR\ifentrytype{mvcollection}%
		\OR\ifentrytype{suppcollection}\OR\ifentrytype{manual}%
		\OR\ifentrytype{periodical}\OR\ifentrytype{suppperiodical}%
		\OR\ifentrytype{proceedings}\OR\ifentrytype{mvproceedings}%
		\OR\ifentrytype{reference}\OR\ifentrytype{mvreference}%
		\OR\ifentrytype{report}\OR\ifentrytype{thesis}}
	{#1}
	{\MakeSentenceCase{#1}}}
\renewrobustcmd*{\bibinitdelim}{\,} 
\theoremstyle{definition}
\newtheorem{definition}{Definition}
\newtheorem{example}{Example}
\newtheorem{remark}{Remark}
\theoremstyle{plain}
\newtheorem{theorem}{Theorem}[section]
\newaliascnt{lemma}{theorem}
\newtheorem{lemma}[lemma]{Lemma}
\newaliascnt{corollary}{theorem}
\newtheorem{corollary}[corollary]{Corollary}
\newaliascnt{proposition}{theorem}
\newtheorem{proposition}[proposition]{Proposition}
\newcommand{\F}{\mathbb{F}}
\newcommand{\Fq}{\mathbb{F}_q}
\newcommand{\GR}{\text{GR}}
\newcommand{\Z}{\mathbb{Z}}
\newcommand{\T}{\mathcal{T}}
\newcommand{\I}{\mathcal{I}}
\begin{document}

\title{On solving isomorphism problems about 2-designs using block intersection numbers}
\author{Christian Kaspers\thanks{Institute for Algebra and Geometry, Otto von Guericke University Magdeburg, 39106 Magdeburg, Germany (email: \href{mailto:christian.kaspers@ovgu.de}{\nolinkurl{christian.kaspers@ovgu.de}}, \href{mailto:alexander.pott@ovgu.de}{\nolinkurl{alexander.pott@ovgu.de}})} \space and Alexander Pott$^*$}
\maketitle

\begin{abstract}
In this paper, we give a partial solution to a new isomorphism problem about $2$-$(v,k,k-1)$ designs from disjoint difference families in finite fields and Galois rings. Our results are obtained by carefully calculating and bounding some block intersection numbers, and we give insight on the limitations of this technique. Moreover, we present results on cyclotomic numbers, the multiplicities of block intersection numbers of certain designs and on the structure of Galois rings of characteristic $p^2$.
\end{abstract}

\paragraph{Keywords} disjoint difference family, Galois ring, combinatorial design, isomorphism problem, intersection number, cyclotomic number

\section{Introduction}
\label{sec:Introduction}
In their previous work~\cite{kaspers2019}, the present authors studied two constructions of difference families in Galois rings by \textcite{davis2017} and by \textcite{momihara2017}. Both constructions were inspired by a classical construction of difference families in finite fields which was introduced by \textcite{wilson1972} in 1972. Various types of difference families have long been studied in combinatorial literature \cite{abel2007,beth1999,buratti2017,buratti2019,chang2006,furino1991,jedwab2019,wilson1972}. They have applications in coding theory and communications and information security \cite{ng2016}, and they are related to many other combinatorial objects. In particular, every difference family gives rise to a combinatorial design. Combinatorial designs themselves have been extensively studied since the first half of the 19th century, they have many applications in group theory, finite geometry and cryptography \cite{beth1999,colbourn2007}.\par

Whenever a new construction of difference families is given, the natural question arises whether the associated designs are also new or whether they are isomorphic to known designs. By calculating and bounding some block intersection numbers, the present authors~\cite{kaspers2019} solved this isomorphism problem for the difference families from \textcite{momihara2017} and \textcite{wilson1972} and for those from \textcite{davis2017} and \textcite{wilson1972}. In this paper, we obtain new difference families from the ones constructed by \textcite{davis2017}. These new difference families also have an analogue in finite fields from Wilson's \cite{wilson1972} construction. Motivated by the present authors' previous results, we will use the same technique as in their paper~\cite{kaspers2019} to study whether the associated designs are isomorphic or not. It will become clear that the approach to use block intersection numbers as a tool to solve isomorphism problems is promising for certain types of designs but has its limitations in general. \par

We start by defining the objects we study in this paper. First, we need the following notations: Let $G$ be an additively written abelian group, $A,B \subseteq G$ and $g \in G$. We define multisets
\begin{align*}
\Delta A 		&= \{ a - a' : a, a' \in A, a \ne a'\},\\
A-B			 	&= \{a-b : a \in A, b \in B, a \ne b\},\\
A + g 			&= \{a + g : a \in A\}.
\end{align*}
We will sometimes use these notations to denote sets, not multisets. It will be clear from the context whether we mean the multiset or the respective set.

\begin{definition}
	\label{def:DF}
	Let $G$ be an abelian group of order $v$, and let $D_1, D_2, \dots, D_b$ be $k$\nobreakdash-subsets of $G$. The collection $D = \{D_1, D_2, \dots, D_b\}$ is called a \emph{$(v,k,\lambda)$ difference family in $G$} if each nonzero element of $G$ occurs exactly $\lambda$ times in the multiset union
	\[
	\bigcup_{i=1}^b \Delta D_i.
	\]
	If the subsets $D_1, D_2, \dots, D_b$ are mutually disjoint, they form a \emph{disjoint difference family}. If $b = 1$, one speaks of a \emph{$(v,k,\lambda)$ difference set}. We call $D$ \emph{near-complete} if the subsets~$D_1,D_2,\dots,D_b$ partition $G \setminus \{0\}$.
\end{definition}

In this paper, we focus on near-complete $(v,k,k-1)$ disjoint difference families. For more background on this type of difference families, the reader is referred to the survey by \textcite{buratti2017} who summarizes many results and introduces a powerful new construction. His construction includes many known constructions, including the one by \textcite{davis2017}. However, it seems to be too general to use it for studying isomorphism problems, at least when using block intersection numbers. Eventually, we remark that every near-complete disjoint difference family is also an external difference family \cite{chang2006,davis2017,kaspers2019}.

As mentioned above, every difference family gives rise to a combinatorial design.
\begin{definition}
	\label{def:design}
	Let $P$ be a set with $v$ elements that are called \emph{points}. A \emph{$t$\nobreakdash-$(v,k,\lambda)$~design}, or \emph{$t$-design}, in brief, is a collection of $k$-subsets, called \emph{blocks}, of $P$ such that every $t$-subset of $P$ is contained in exactly $\lambda$ blocks.
\end{definition}
The associated designs of difference families are $2$-designs which are often referred to as \emph{balanced incomplete block designs (BIBD)}. They are constructed as the development of a difference family.

\begin{definition}
	Let $G$ be an abelian group, and let $D = \{D_1, D_2, \dots, D_b\}$ be a collection of subsets of $G$. The \emph{development of $D$} is the collection
	\[
	dev(D) = \left\lbrace D_i + g : D_i \in D, g \in G\right\rbrace
	\]
	of all the translates of the sets $D_1,D_2,\dots,D_b$. The sets $D_1, D_2, \dots, D_b$ are called the \emph{base blocks} of $dev(D)$.
\end{definition}

In other words, the development of $D$ is the union of the orbits of the sets contained in $D$ under the action of $G$. If all orbits have full length, $dev(D)$ contains $vb$ blocks. The following well-known proposition relates difference families to $2$-designs. 

\begin{proposition}
	\label{prop:dev_design}
	Let $D$ be a $(v,k,\lambda)$ difference family in an abelian group $G$. The development $dev(D)$ of $D$ forms a $2$-$(v,k,\lambda)$ design with point set $G$.
\end{proposition}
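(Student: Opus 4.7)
The plan is to show directly that $dev(D)$ satisfies the two axioms in Definition~\ref{def:design}: the blocks are $k$-subsets of the point set $G$ (which has $v$ elements), and every $2$-subset of $G$ is contained in exactly $\lambda$ blocks. The first point is immediate, since translation by an element $g \in G$ is a bijection of $G$, so each $D_i + g$ has the same cardinality as $D_i$, namely $k$.

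For the second point, fix two distinct points $x, y \in G$. I would count, over the parametrization $(i, g) \in \{1, \dots, b\} \times G$ of $dev(D)$, the number of blocks $D_i + g$ containing $\{x, y\}$. The condition $\{x, y\} \subseteq D_i + g$ is equivalent to the existence of elements $a, a' \in D_i$ with $a \ne a'$, $x = a + g$ and $y = a' + g$. This forces $a - a' = x - y$ and $g = x - a$, so for each index $i$ and each ordered pair $(a, a') \in D_i \times D_i$ with $a \ne a'$ and $a - a' = x - y$, there is exactly one translate $g \in G$ giving a block through $\{x, y\}$, and conversely.

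Summing over $i$, the number of blocks in $dev(D)$ containing $\{x, y\}$ equals the total multiplicity of $x - y$ in the multiset union $\bigcup_{i=1}^b \Delta D_i$. Since $x \ne y$, the element $x - y$ is a nonzero element of $G$, so by Definition~\ref{def:DF} this multiplicity is exactly $\lambda$. This completes the verification that $dev(D)$ is a $2$-$(v,k,\lambda)$ design on the point set $G$.

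There is no real obstacle here; the only subtle point worth flagging is the bookkeeping of $dev(D)$ as a multiset indexed by $(i, g)$ rather than as a set of distinct translates, so that the equality with the multiset $\bigcup_i \Delta D_i$ is exact. This matches the convention already suggested by the remark in the text that $dev(D)$ contains $vb$ blocks when all orbits have full length.
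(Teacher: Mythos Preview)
The paper states this proposition as well known and does not supply a proof; your direct verification is the standard argument and is correct. Your remark about counting over the parametrization $(i,g)$ so that $dev(D)$ is treated as a multiset of blocks is exactly the right bookkeeping, and it is consistent with the paper's convention.
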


\section{Galois rings}
\label{sec:Galois_rings}
In this section, we give a short introduction to Galois rings and present some of their well-known properties needed in this paper. We refer to the work by \textcite{mcdonald1974,wan2003} for extended general background on this topic. Let $p$ be a prime, and let $f(x) \in \Z_{p^m}[x]$ be a monic basic irreducible polynomial of degree~$r\ge1$, which means that the image of $f$ modulo $p$ in $\F_p[x]$ is irreducible. The factor ring 
\[
\Z_{p^m}[x]/ \langle f(x)\rangle
\]
is called a \emph{Galois ring} of characteristic $p^m$ and extension degree $r$. It is denoted by $\GR(p^m, r)$, and its order is $p^{mr}$. Since any two Galois rings of the same characteristic and order are isomorphic, we will speak of \emph{the} Galois ring~$\GR(p^m, r)$.\par
Galois rings are local commutative rings. The unique maximal ideal of the ring $R = \GR(p^m, r)$ is 
\[
\I = pR = \{pa : a \in R\}.
\]
The factor ring $R / \I$ is isomorphic to the finite field $\F_{p^r}$ with $p^r$ elements. As a system of representatives of $R/\I$, we take the \emph{Teichmüller set}
\[
\T = \{0, 1, \xi, \dots, \xi^{p^r-2}\},
\]
where $\xi$ denotes a root of order $p^r-1$ of $f(x)$. It is convenient to choose the \emph{generalized Conway polynomial}, that is the Hensel lift from $\F_p[x]$ to $\mathbb{Z}_{p^m}[x]$ of the Conway polynomial, as our polynomial $f(x)$. Then, $x +\langle f \rangle$ is a generator of the Teichmüller group, and we set $\xi = x + \langle f \rangle$. \textcite[Section~1.3]{zwanzger2011} provides more information on the generalized Conway polynomial and its construction. Every $a\in R$ has a unique \emph{$p$-adic representation} $a = \alpha_0 + p\alpha_1 + \dots + p^{m-1}\alpha_{m-1}$, where $\alpha_0, \alpha_1, \dots, \alpha_{m-1} \in \T$.\par
The elements of $R \setminus \I$ are all the units of $R$. We denote this \emph{unit group} by $R^*$. It has order $p^{mr} - p^{(m-1)r}$ and is the direct product of the cyclic \emph{Teichmüller group}
\[
\T^* = \T \setminus \{0\}
\] of order $p^r-1$ and the \emph{group of principal units $\mathbb{P} = 1 + \I$} of order $p^{(m-1)r}$. If $p$ is odd or if $p = 2$ and $m \leq 2$, then $\mathbb{P}$ is a direct product of $r$ cyclic groups of order $p^{m-1}$. If $p = 2$ and $m \geq 3$, then $\mathbb{P}$ is a direct product of a cyclic group of order $2$, a cyclic group of order $2^{m-2}$, and $r-1$ cyclic groups of order~$2^{m-1}$. In this paper, we will only consider Galois rings of characteristic~$p^2$. In this case, $(1+p\alpha)(1+p\beta) = 1+p(\alpha + \beta)$ for any $\alpha, \beta \in \T$, and every unit $u \in \GR(p^2, r)^*$ has a unique representation 
\[
u = \alpha_0 (1 + p\alpha_1),
\]
where $\alpha_0 \in \T^*$ and $\alpha_1 \in \T$. Moreover, the group of principal units $\mathbb{P}$ is a direct product of $r$ cyclic groups of order $p$ and thus has the structure of an elementary abelian group of order $p^r$.

\section{Construction of disjoint difference families}
In this section, we describe three constructions of disjoint difference families. The constructions from \autoref{th:Fq_DDF} and \autoref{th:Davis_EDF} are well known. The third construction, in \autoref{th:squares_GR_DDF}, follows from results by \textcite{furino1991}. As the first two constructions also fall into \citeauthor{furino1991}'s very general framework, we will present his result first. We only restate a special case of his construction.

\begin{theorem}[{\cite[Theorem~3.3 and Corollary~3.5]{furino1991}}]
	\label{th:Furino}
	Let $R$ be a commutative ring with an identity. Denote the cardinality of $R$ by $v$ and the unit group of $R$ by~$R^*$. Let $B$ be a subgroup of~$R^*$ of order $k$ such that $\Delta B$ is a subset of $R^*$. Denote by~$S$ a system of representatives of the cosets of $B$ in $R \setminus \{0\}$. The collection~$\{sB : s \in S\}$ is a $(v,k,k-1)$~disjoint difference family in the additive group of $R$.
\end{theorem}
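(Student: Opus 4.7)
The plan is to verify the two defining conditions of \autoref{def:DF} directly: that the collection $\{sB:s\in S\}$ consists of $b=(v-1)/k$ mutually disjoint blocks of size $k$, and that every nonzero $g\in R$ occurs exactly $k-1$ times in the multiset union $\bigcup_{s\in S}\Delta(sB)$.

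First I would show that $B$ acts freely on $R\setminus\{0\}$ by left multiplication. If $by=y$ for some $b\in B$ and some nonzero $y\in R$, then $(b-1)y=0$; when $b\neq 1$ we have $b-1\in\Delta B\subseteq R^*$, which would force $y=0$. Hence each orbit $Bs$ has size exactly $k$, and since $S$ is a system of representatives of the $B$-orbits in $R\setminus\{0\}$, these orbits partition $R\setminus\{0\}$. This gives the desired mutual disjointness, the block size $k$, and the block count $b=(v-1)/k$.

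Next, I would fix a nonzero $g\in R$. An occurrence of $g$ in $\bigcup_{s\in S}\Delta(sB)$ corresponds to an ordered pair $(x,y)$ of distinct elements lying in a common block, with $x-y=g$. By the free action there is a unique $b\in B$ with $x=by$, and $x\neq y$ forces $b\neq 1$; the equation $x-y=g$ then becomes $(b-1)y=g$. Since $b-1\in\Delta B\subseteq R^*$, this has the unique solution $y=(b-1)^{-1}g$, which is nonzero because $g$ is nonzero and $(b-1)^{-1}$ is a unit; correspondingly $x=by$. Conversely, each of the $k-1$ elements $b\in B\setminus\{1\}$ produces exactly one such pair, so $g$ appears with multiplicity $k-1$ as required.

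The crux is therefore the hypothesis $\Delta B\subseteq R^*$, which is used in two ways: to make the $B$-action on $R\setminus\{0\}$ free (so the orbits genuinely partition $R\setminus\{0\}$ into classes of size $k$), and to invert $b-1$ when solving $(b-1)y=g$ for arbitrary $g$, possibly a zero divisor. Once these two consequences are extracted, no further obstacle arises; I do not expect any step to be technically hard, as the count reduces to the $k-1$ nonidentity elements of $B$.
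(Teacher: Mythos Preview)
Your argument is correct. The paper itself does not supply a proof of this theorem; it is quoted as a special case of \cite[Theorem~3.3 and Corollary~3.5]{furino1991} and used as a black box. So there is no in-paper proof to compare against, but your direct verification via the free $B$-action on $R\setminus\{0\}$ and the bijection $b\mapsto\bigl((b-1)^{-1}g\cdot b,\,(b-1)^{-1}g\bigr)$ between $B\setminus\{1\}$ and representations of $g$ is exactly the natural route and would serve as a self-contained replacement for the citation.
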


Note that, by abuse of denotation, we also call sets $sB$ where $s$ is not a unit a coset of $B$. Because of the condition $\Delta B \subseteq R^*$, these cosets also have cardinality~$k$, and all the cosets partition $R \setminus \{0\}$. 

Next, we present the construction of disjoint difference families in finite fields by \textcite{wilson1972}. It makes use of the cyclotomoy of the $e$-th powers in a finite field. 
\begin{theorem}
	\label{th:Fq_DDF}
	Let $\F_q$ be the finite field with $q$ elements, and let $\alpha$ be a generator of the multiplicative group $\Fq^*$ of $\Fq$. Moreover, let $e,f$ be integers satisfying $ef = q-1$, where $e,f \geq 2$, and let
	\[
	C_i = \{ \alpha^{t}\ : \ t \equiv i \pmod e  \},
	\]
	where $i = 0, 1, \dots, e-1$, be the cosets of the unique subgroup $C_0$ of index $e$ and order $f$ that is formed by the $e$-th powers of~$\alpha$ in $\Fq^*$. Then, the collection $C = \{C_0, C_1, \dots, C_{e-1}\}$ is a near-complete $(q, f, f-1)$ disjoint difference family in the additive group of $\F_q$.
\end{theorem}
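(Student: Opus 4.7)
The plan is to obtain this theorem as a direct corollary of \autoref{th:Furino}, since the finite field $\Fq$ is a commutative ring with identity and the hypotheses of Furino's framework are easy to verify. First I would set $R = \Fq$ and $B = C_0$. The cardinality of $R$ is $v = q$, and the unit group of $R$ is $R^* = \Fq^* = \Fq \setminus \{0\}$ of order $q-1$.

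Next I would verify that $B = C_0$ is a subgroup of $R^*$ of order $k = f$: this is immediate from the fact that $\Fq^*$ is cyclic of order $ef$ and $C_0$ is its unique subgroup of index $e$ (namely the image of the homomorphism $\alpha \mapsto \alpha^e$). I would then check that $\Delta C_0 \subseteq R^*$. This is the place where the ambient structure matters in \autoref{th:Furino}, but here it is automatic: in a field, the difference of two distinct nonzero elements is again a nonzero element, hence a unit, so $\Delta B \subseteq \Fq \setminus \{0\} = R^*$.

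It remains to identify a system of representatives of the cosets of $C_0$ in $R \setminus \{0\} = \Fq^*$. Since $[\Fq^* : C_0] = e$, the set $S = \{\alpha^0, \alpha^1, \dots, \alpha^{e-1}\}$ is such a system of representatives, and $\alpha^i C_0 = C_i$ by the very definition of the $C_i$. \autoref{th:Furino} then yields that $\{C_0, C_1, \dots, C_{e-1}\}$ is a $(q, f, f-1)$ disjoint difference family in the additive group of~$\Fq$. Finally, for the near-complete property, I would observe that the cosets $C_0, C_1, \dots, C_{e-1}$ partition $\Fq^* = \Fq \setminus \{0\}$ by Lagrange's theorem, which is exactly the definition of near-completeness.

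I do not expect any real obstacle in the argument; the theorem is essentially a specialization of \autoref{th:Furino} to the field case, where the condition $\Delta B \subseteq R^*$ and the partition of $R \setminus \{0\}$ into translates of $B$ both come for free from the absence of zero divisors. The only thing that merits an explicit sentence is why $C_0$ is really the subgroup featured in Furino's statement, which I would justify by cyclicity of~$\Fq^*$.
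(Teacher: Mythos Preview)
Your proposal is correct, and it is precisely in line with the paper's treatment: the paper does not give an explicit proof of \autoref{th:Fq_DDF} but remarks that this construction, like \autoref{th:Davis_EDF}, falls into Furino's general framework of \autoref{th:Furino}. Your derivation spells out exactly that specialization, so there is nothing to add.
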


We now present the construction of disjoint difference families by \textcite{davis2017}. We use the same notation as in \autoref{sec:Galois_rings}, and we remark that this theorem also follows from \autoref{th:Furino} and from a result by \textcite{buratti2017}.

\begin{theorem}[{\cite[Theorem~4.1]{davis2017}}]
	\label{th:Davis_EDF}
	Let $p$ be a prime, and let $r$ be a positive integer such that $p^r \ge 3$. Denote by $\T$ the Teichmüller set of the Galois ring~$\GR(p^2,r)$ and by $\T^*$ the Teichmüller group $\T^* = \T \setminus \{0\}$. The collection
	\[
	E = \left \lbrace (1+p\alpha)\T^* : \alpha \in \T \right \rbrace \cup p\T^*
	\]
	forms a near-complete $(p^{2r}, p^r-1, p^r-2)$ disjoint difference family in the additive group of $\GR(p^2, r)$.
\end{theorem}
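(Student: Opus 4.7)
The plan is to derive \autoref{th:Davis_EDF} as a direct application of Furino's theorem (\autoref{th:Furino}) with the ring $R = \GR(p^2,r)$ and the subgroup $B = \T^*$. Recall from \autoref{sec:Galois_rings} that $\T^*$ is a cyclic subgroup of $R^*$ of order $p^r-1$, so the block size will automatically be $k = p^r-1$ and the replication number $\lambda = k-1 = p^r-2$, matching the claimed parameters. The main hypothesis to check is $\Delta \T^* \subseteq R^*$: for distinct $\alpha,\beta\in\T^*$, the difference $\alpha-\beta$ cannot lie in the maximal ideal $\I$, because $\T$ is a system of representatives of $R/\I \cong \F_{p^r}$; hence $\alpha-\beta$ is a unit.

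Next I would identify all cosets of $\T^*$ in $R\setminus\{0\}$ explicitly. Using the direct product decomposition $R^* = \T^* \cdot \mathbb{P}$ with $\mathbb{P} = 1 + \I$, and the fact that in characteristic $p^2$ every principal unit has the unique representation $1+p\alpha$ with $\alpha\in\T$, the cosets of $\T^*$ in $R^*$ are precisely the $p^r$ sets $(1+p\alpha)\T^*$, $\alpha\in\T$. For the remaining nonzero elements, a short computation using $p^2 = 0$ in $R$ shows that $\I = p\T$ and that the map $\alpha \mapsto p\alpha$ is injective on $\T$, so $\I\setminus\{0\} = p\T^*$ is a single "coset" (in the abuse-of-notation sense spelled out just after \autoref{th:Furino}) of cardinality $p^r-1$.

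Finally, the cardinality check $(p^r+1)(p^r-1) = p^{2r}-1 = |R\setminus\{0\}|$ confirms that the $p^r+1$ cosets exhausted above partition $R\setminus\{0\}$, so the collection $E$ coincides with the family produced by \autoref{th:Furino}. This immediately gives that $E$ is a $(p^{2r}, p^r-1, p^r-2)$ disjoint difference family, and the partition property is exactly near-completeness. The argument is essentially routine once Furino's theorem is cited; the only point requiring slight care is the inclusion of $p\T^*$ as a valid "coset" despite $p$ not being a unit, and this is precisely the situation flagged in the remark following \autoref{th:Furino}.
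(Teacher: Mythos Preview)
Your proposal is correct. The paper does not actually supply its own proof of \autoref{th:Davis_EDF}: it is quoted from \textcite{davis2017} and the authors simply remark that ``this theorem also follows from \autoref{th:Furino} and from a result by \textcite{buratti2017}.'' Your derivation via \autoref{th:Furino} is precisely the route the paper points to, and your verification of the hypothesis $\Delta\T^* \subseteq R^*$ is the same argument the authors later spell out in the proof of \autoref{th:squares_GR_DDF}.
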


Since $p^r-1$ divides $p^{2r}-1$, there exists a disjoint difference family in the additive group of $\F_{p^{2r}}$ that has the exact same parameters as the difference family from \autoref{th:Davis_EDF}. It can be constructed using \autoref{th:Fq_DDF} by taking the $(p^r+1)$-th powers in $\F_{p^{2r}}$. Inspired by \autoref{th:Davis_EDF} and the work by \textcite{furino1991}, we noticed that if $p$ is odd, we obtain a new disjoint difference family by taking the cosets of the group of \emph{Teichmüller squares}. 
\begin{theorem}
	\label{th:squares_GR_DDF}
	Let $p$ be an odd prime and let $r$ be a positive integer such that $p^r \ge 5$. Moreover, let 
	\[
		\T^* = \{1,\xi,\xi^2, \dots, \xi^{p^r-2}\}
	\] 
	be the Teichmüller group of the Galois ring $\GR(p^2,r)$, and let $\T = \T^*\cup\{0\}$. By 
	\[
		\T_S^* = \{1, \xi^2, \dots, \xi^{p^r-3}\}
	\]
	we denote the set of squares and by 
	\[
		\T_N^* = \{\xi, \xi^3, \dots, \xi^{p^r-2}\}
	\]
	we denote the set of non-squares in $\T^*$. The collection
	\[
		E^H = \{(1+p\alpha)\T_S^*:\alpha \in \T\} \cup \{p\T_S^*\} \cup \{(1+p\alpha)\T_N^* : \alpha \in \T\} \cup \{p\T_N^*\}
	\]
	forms a near complete $\left(p^{2r}, \frac{p^r-1}{2}, \frac{p^r-3}{2}\right)$ disjoint difference family in the additive group of $\GR(p^2,r)$.
\end{theorem}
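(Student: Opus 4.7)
The plan is to derive this theorem as a direct application of \autoref{th:Furino} with $B = \T_S^*$ in $R = \GR(p^2,r)$, followed by an explicit identification of the $\T_S^*$-cosets in $R \setminus \{0\}$ with the blocks listed in $E^H$.

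First I would verify the hypotheses of \autoref{th:Furino}. Since $p$ is odd, the Teichm\"uller group $\T^*$ is cyclic of even order $p^r - 1$, and $\T_S^*$ is its unique index-$2$ subgroup, hence a subgroup of $R^*$ of order $k = (p^r-1)/2 \ge 2$ under the standing assumption $p^r \ge 5$. For $\Delta \T_S^* \subseteq R^*$, it suffices to observe that for any two distinct $a, b \in \T$ one has $a - b \notin \I = pR$: otherwise $a$ and $b$ would share the same image in $R/\I \cong \F_{p^r}$, contradicting the fact that $\T$ is a transversal of $\I$. Thus every nonzero difference of Teichm\"uller elements is a unit, and \autoref{th:Furino} produces a $(p^{2r}, (p^r-1)/2, (p^r-3)/2)$ disjoint difference family whose blocks are the $\T_S^*$-cosets in $R \setminus \{0\}$.

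Second, I would match these cosets with the blocks in $E^H$. Using the unique representation $u = \xi^i(1+p\alpha)$ of units of $R^*$ with $0 \le i < p^r - 1$ and $\alpha \in \T$, combined with $\T^* = \T_S^* \sqcup \xi \T_S^* = \T_S^* \sqcup \T_N^*$, the $\T_S^*$-cosets inside $R^*$ are exactly
\[
(1+p\alpha)\T_S^* \quad \text{and} \quad (1+p\alpha)\T_N^*, \qquad \alpha \in \T,
\]
for a total of $2p^r$ cosets. Their pairwise distinctness reduces to $\mathbb{P} \cap \T^* = \{1\}$, which follows from the uniqueness of the $p$-adic representation. On the non-unit side, $\I \setminus \{0\} = p\T^*$, and the map $x \mapsto px$ is injective on $\T$ (its kernel is $\I \cap \T = \{0\}$), so $p\T_S^*$ and $p\T_N^*$ are disjoint sets of size $(p^r-1)/2$ partitioning $p\T^*$. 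Altogether, $E^H$ consists of $2(p^r+1)$ pairwise disjoint blocks of size $(p^r-1)/2$ that partition $R \setminus \{0\}$, so it is near-complete and coincides with the output of Furino's construction.

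The actual difference-family property thus comes for free from \autoref{th:Furino}; the only step that really requires care is the bookkeeping in the coset identification, which is entirely driven by the uniqueness of the $p$-adic representation in $\GR(p^2,r)$ and the multiplicative structure of $R^*$ recalled in \autoref{sec:Galois_rings}.
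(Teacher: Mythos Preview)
Your proof is correct and follows essentially the same route as the paper: both apply \autoref{th:Furino} with $B=\T_S^*$, verifying $\Delta\T_S^*\subseteq R^*$ via the fact that $\T$ is a transversal of $\I$. You add an explicit identification of the $\T_S^*$-cosets in $R\setminus\{0\}$ with the listed blocks of $E^H$, which the paper leaves implicit; this is a welcome bit of bookkeeping but not a different argument.
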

\begin{proof}
	Denote by $\I = p\GR(p^2,r)$ the maximal ideal of $\GR(p^2,r)$. The Teichmüller set~$\T$ is a system of representatives of $\GR(p^2,r)/\I$. This factor ring is isomorphic to the finite field $\F_{p^{r}}$. Consequently, the difference of two distinct elements of the Teichmüller group~$\T^*$ is a unit, hence $\Delta\T^* \subseteq \GR(p^2,r)^*$. As $\T^*_S$ is a subgroup of~$\T^*$, it follows that $\Delta \T^*_S$ is a subset of the unit group $\GR(p^2,r)^*$.  In this case, according to \autoref{th:Furino}, the collection of the cosets of $\T^*_S$ in $\GR(p^2,r) \setminus \{0\}$ forms a disjoint difference family in the additive group of $\GR(p^2,r)$.
\end{proof}
Note that the difference family $E^H$ from \autoref{th:squares_GR_DDF} can be obtained from the difference family $E$ presented in \autoref{th:Davis_EDF} by cutting the base blocks of $E$ into halves, hence the name~$E^H$. Furthermore, note that there exists a difference family $C^H$ in the finite field $\F_{p^{2r}}$ which has the same parameters as~$E^H$. According to \autoref{th:Fq_DDF}, the cosets of the subgroup~$C^H_0$ of the $2(p^r+1)$-th powers in $\F_{p^{2r}}^*$ form a $\left(p^{2r}, \frac{p^r-1}{2}, \frac{p^r-3}{2}\right)$ disjoint difference family in the additive group of $\F_{p^{2r}}$. In the following section, we will study the isomorphism problem for the difference families $C^H$ and $E^H$ from finite fields and Galois rings. Moreover, we present additional isomorphism invariants of the designs in finite fields coming from \autoref{th:Fq_DDF}.

\section{A partial solution to the isomorphism problem}
\label{sec:isoproblem}
Denote by $C$ the $(p^{2r},p^r-1,p^r-2)$ difference family and by $C^H$ the $\left(p^{2r}, \frac{p^r-1}{2}, \frac{p^r-3}{2}\right)$ difference family in the additive group of $\F_{p^{2r}}$ which are constructed using \autoref{th:Fq_DDF}. Denote by $E$ the $(p^{2r},p^r-1,p^r-2)$ difference family and by $E^H$ the $\left(p^{2r}, \frac{p^r-1}{2}, \frac{p^r-3}{2}\right)$ difference family in the additive group of $\GR(p^2,r)$ which are constructed using \autoref{th:Davis_EDF} and \autoref{th:squares_GR_DDF}, respectively.\par

In their previous work, the present authors~\cite{kaspers2019} solved the isomorphism problem for the $2$-$(p^{2r},p^r-1,p^r-2)$ designs $dev(C)$ and $dev(E)$. They showed that the designs are nonisomorphic for all combinations of $p$ and $r$ except $p=3$ and $r=1$. In this section, we will give a partial solution to the isomorphism problem for the $2$-$\left(p^{2r}, \frac{p^r-1}{2}, \frac{p^r-3}{2}\right)$ designs $dev(C^H)$ and $dev(E^H)$. Note that these designs can be obtained from $dev(C)$ and $dev(E)$, respectively, by cutting every block into two halves.

\begin{remark}
	\label{rem:feng}
	The fact that two designs $\mathcal{D}_1, \mathcal{D}_2$ are nonisomorphic does not imply that two designs $\mathcal{D}_1^H,\mathcal{D}_2^H$ that are obtained by cutting the blocks of $\mathcal{D}_1$ and $\mathcal{D}_2$ into smaller blocks are nonisomorphic. This is shown in the following example which was given in the context of skew Hadamard difference sets by \textcite[Example~3.3]{feng2012}. Denote by $C_0$ the subgroup of the $14$\nobreakdash-th~powers of the multiplicative group of the finite field~$\F_{11^3}$ and by $C_0, C_1, \dots, C_{13}$ the cosets of~$C_0$. It follows from \autoref{th:Fq_DDF} that the collection $C = \{C_0,C_1,\dots, C_{13}\}$ is a disjoint difference family in the additive group of $\F_{11^3}$. The collections
	\begin{align*}
	D_1 &= \{\{C_0 \cup C_2 \cup C_4 \cup C_6 \cup C_8 \cup C_{10} \cup C_{12}\},\\ &\qquad\{C_1 \cup C_3 \cup C_5 \cup C_7 \cup C_9 \cup C_{11} \cup C_{13}\}\},\\
	D_2 &= \{\{C_0 \cup C_1 \cup C_2 \cup C_3 \cup C_4 \cup C_5 \cup C_6 \},\\ &\qquad \{C_7 \cup C_8 \cup C_9 \cup C_{10} \cup C_{11} \cup C_{12} \cup C_{13}\}\},\\
	D_3 &= \{\{C_0 \cup C_1 \cup C_3 \cup C_4 \cup C_5 \cup C_6 \cup C_{9}\},\\ &\qquad \{C_2 \cup C_7 \cup C_8 \cup C_{10} \cup C_{11} \cup C_{12} \cup C_{13}\}\}.
	\end{align*}
	are also disjoint difference families in the additive group of $\F_{11^3}$. Consider their associated designs $dev(D_1), dev(D_2), dev(D_3)$. Their full automorphism groups~$\mathcal{A}_1, \mathcal{A}_2,\mathcal{A}_3$ have orders $|\mathcal{A}_1| = 5310690, |\mathcal{A}_2| = 252890$ and $|\mathcal{A}_3| = 758670$. Thus, the designs are pairwise nonisomorphic. However, it is clear that from all three difference families, we can obtain the difference family $C$ by cutting their base blocks into the cyclotomic cosets $C_0, C_1,\dots,C_{13}$. Hence, from the nonisomorphic designs $dev(D_1),dev(D_2),dev(D_3)$, we can obtain the exact same design $dev(C)$.
\end{remark}

The present authors~\cite{kaspers2019} solved the isomorphism problem for $dev(C)$ and $dev(E)$ by comparing the block intersection numbers of these designs.
\begin{definition}
	\label{def:blockintersectionnumber}
	We call an integer $N$ a \emph{block intersection number} of a $t$\nobreakdash-design~$\mathcal{D}$, if $\mathcal{D}$ contains two distinct blocks $B$ and $B'$ that intersect in $N$~elements.
\end{definition}
Block intersection numbers are invariant under isomorphism. For a given design~$\mathcal{D}$, they can be easily computed as the entries of the matrix $M^TM$, where $M$ is the incidence matrix of $\mathcal{D}$ with the rows corresponding to the points and the columns corresponding to the blocks of~$\mathcal{D}$. Note, however, that there also exist designs that have the exact same block intersection numbers but are nonisomorphic. One example are the designs given in \autoref{rem:feng}. These designs are pairwise nonisomorphic, but they all share the intersection numbers $0,332,333$. For the designs $dev(C^H)$ and $dev(E^H)$, however, block intersection numbers seem to distinguish the designs as the following example shows.
\begin{example}
\label{ex:easy_example}
	The constructions from \autoref{th:Fq_DDF} and \autoref{th:squares_GR_DDF} yield $(625,12,11)$~disjoint difference families $C^H$ and $E^H$ in the additive groups of~$\F_{5^4}$ and $\GR(25,2)$, respectively. The associated $2$-$(625,12,11)$~designs have the following block intersection numbers: for $dev(C^H)$, they are $0,1,5,6$, and for $dev(E^H)$, they are $0,1,2,5,6$. Hence, the two designs are nonisomorphic.
\end{example}

Before we start with the actual calculation of our block intersection numbers we focus on their multiplicities.

\begin{remark}
\label{rem:multiplicities_BIN}
	Not only the block intersection numbers themselves but also their multiplicities are isomorphism invariants of a combinatorial design. Hence, in the following, we will not only state the intersection numbers but also their multiplicities whenever it is possible. Although we will not use the multiplicities to solve an isomorphism problem in this paper, they might be useful for further research. To determine the multiplicity of an intersection number $N$, we will first count the number of pairs $(i,j)$ such that two blocks $B_i$ and $B_j$ intersect in $N$ elements without considering that $B_i \cap B_j = B_j \cap B_i$. In the end, we divide this number by $2$.
\end{remark}

\begin{example}
	\label{ex:multiplicities}
	The multiplicities of the block intersection numbers of the designs from \autoref{ex:easy_example} are as follows: In $dev(C^H)$, the intersection numbers $0,1,5,6$ occur with multiplicities $410\,328\,750$, $117\,000\,000$, $195\,000$, and $585\,000$, respectively. In $dev(D^H)$, the intersection numbers $0,1,2,5,6$ have multiplicities $417\,078\,750$, $100\,687\,500$, $10\,312\,500$, $7\,500$ and $22\,500$, respectively. Hence, the multiplicities distinguish the designs.\par
	For all three designs given in \autoref{rem:feng}, the multiplicities of the block intersection numbers $0,332,333$ are $1331$, $2\,655\,345$ and $885\,115$, respectively. Hence, in this case, neither the intersection numbers nor their multiplicities distinguish the designs.
\end{example}

In the remainder of this section, we will first generally describe the block intersection numbers and their multiplicities of the designs coming from the disjoint difference families in~$\F_q$ that we presented in \autoref{th:Fq_DDF}. From this result, we will derive the block intersection numbers and their multiplicities of the designs $dev(C)$ and $dev(C^H)$. Note that the present authors, in their previous paper~\cite{kaspers2019}, already gave the intersection numbers of $dev(C)$. We will now contribute the associated multiplicities. Finally, we will establish bounds on the intersection numbers of $dev(E^H)$ which will lead to a partial solution of the isomorphism problem of the designs $dev(C^H)$ and $dev(E^H)$.

The block intersection numbers of the design from \autoref{th:Fq_DDF} are strongly related to the so-called cyclotomic numbers: As in \autoref{th:Fq_DDF}, let $C_0, C_1, \dots, C_{e-1}$ be the cosets of the subgroup~$C_0$ of the $e$-th powers in $\Fq^*$. For fixed non-negative integers $i,j \le e-1$, the \emph{cyclotomic number $(i,j)_e$ of order $e$} is defined as
\[
	(i,j)_e = |(C_i + 1) \cap C_j|.
\]
Denote by $n_e(N)$ the number of pairs $(i,j)$, where $i,j \le e-1$, such that the cyclotomic number $(i,j)_e = N$.

\begin{proposition}
	\label{prop:cyclo_BIN}
	Let $e,f \ge 2$ be integers such that $ef = q-1$, and let $C$ be a $(q,f,f-1)$ disjoint difference family in the additive group of $\F_q$ constructed with \autoref{th:Fq_DDF}. The block intersection numbers of the $2$-$(q,f,f-1)$ design~$dev(C)$ are $0$ and the values of the cyclotomic numbers $(i,j)_e$ of order $e$ in $\F_q$. The intersection number~$0$ has multiplicity~$\frac{1}{2}q(q-1)n_e(0) + \frac{1}{2}qe(e-1)$, each nonzero intersection number $N$ has multiplicity $\frac{1}{2}q(q-1)n_e(N)$.
\end{proposition}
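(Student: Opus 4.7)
The plan is to compute the intersection of any two distinct blocks of $dev(C)$ in terms of cyclotomic numbers and then count the multiplicities of each value.

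I would first observe that the blocks of $dev(C)$ are the $qe$ translates $C_i + g$ with $i \in \{0,\dots,e-1\}$ and $g \in \F_q$, all of which are distinct. The additive stabilizer of $C_i$ is an $\F_p$-subspace of the additive group of $\F_q$, so its order is a power of $p$ dividing $|C_i| = f$; since $f \mid q-1$ one has $\gcd(f,p) = 1$, forcing the stabilizer to be trivial. Together with the fact that $dev(C)$ is a $2$-$(q,f,f-1)$ design (\autoref{prop:dev_design}), this ensures the orbits of distinct base blocks do not overlap. Distinctness matters because otherwise $f$ would artificially appear as an intersection number.

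Next, for distinct blocks $C_i + g$ and $C_j + h$ I would split into two cases. If $g = h$ (and hence $i \ne j$), then $C_i \cap C_j = \emptyset$ because $C$ is a disjoint difference family, so the intersection is $0$. Otherwise $g \ne h$; setting $d = h - g$ and translating yields $|(C_i+g)\cap(C_j+h)| = |C_i \cap (C_j + d)|$. Let $k$ be the unique index with $d \in C_k$, so that $d^{-1} \in C_{-k}$. Multiplying the intersection through by $d^{-1}$ then gives
\[
|C_i \cap (C_j + d)| = |C_{i-k} \cap (C_{j-k} + 1)| = (j-k, i-k)_e,
\]
so every intersection number is either $0$ or a cyclotomic number of order $e$.

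For the multiplicities I would count ordered pairs of distinct blocks and halve at the end, following \autoref{rem:multiplicities_BIN}. The same-shift case $g = h$, $i \ne j$ contributes $q \cdot e(e-1)$ ordered pairs, all with intersection~$0$. For the different-shift case, fix $(g, d) \in \F_q \times \F_q^*$; then $(i,j) \mapsto (j-k, i-k)$ is a bijection of $\{0,\dots,e-1\}^2$ onto itself, so as $(i,j)$ varies, the intersection value $(j-k,i-k)_e$ realizes each cyclotomic number $N$ exactly $n_e(N)$ times. Summing over the $q(q-1)$ choices of $(g, d)$ gives $q(q-1)\,n_e(N)$ ordered pairs with intersection $N$. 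Adding the $qe(e-1)$ extra contribution for $N = 0$ and dividing by $2$ yields the stated multiplicities.

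I do not expect a serious obstacle. The only substantive step is the scaling trick by $d^{-1}$ that converts a generic intersection into the standard cyclotomic form, after which the rest is careful bookkeeping. The mildly delicate point is verifying distinctness of the translates, which keeps $f$ from spuriously showing up among the intersection numbers.
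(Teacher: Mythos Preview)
Your proof is correct and follows essentially the same route as the paper: split into the cases $g=h$ and $g\ne h$, reduce the latter to a cyclotomic number via scaling (the paper does the equivalent manipulation in the exponent), and then count ordered pairs and halve. Your explicit verification that the $qe$ translates are pairwise distinct is a nice touch the paper leaves implicit.
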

\begin{proof}
	Denote by $\alpha$ a primitive element of the finite field $\F_q$. Let $C = \{C_0,C_1, \dots, C_{e-1}\}$ be a disjoint difference family from \autoref{th:Fq_DDF} in the additive group of $\F_q$. Take two arbitrary distinct blocks $C_i + a$ and $C_j + b$ of $dev(C)$. If we want to calculate the cardinality 
	\[
	|(C_i + a) \cap (C_j + b)|
	\]
	of their intersection, we need to determine the number of solutions $(s,t)$ of the equation
	\begin{align}
	\label{eq:cyclo}
		\alpha^{se + i} + a  =  \alpha^{te+j} + b.
	\end{align}
	If $a=b$, then obviously only the case $i \ne j$ is relevant. As $C_i$ and $C_j$ are disjoint, there are no solutions in this case and
	\[
		|(C_i +a) \cap (C_j + a)| = 0.
	\]
	Since there are~$q$ choices for $a$ and $e(e-1)$ choices for $(i,j)$ such that $i \ne j$, the block intersection number $0$ occurs $qe(e-1)$ times in this context. Removing repeated intersections, this multiplicity reduces to $\frac{qe(e-1)}{2}$.\par
	
	If $a \ne b$, then $a-b = \alpha^r$ for some $r \in \{0,\dots,q-1\}$. Write $r = me+r'$ such that $0 \le r' \le e-1$. Now, we can rewrite \cref{eq:cyclo} as
	\[
	\alpha^{(s-m)e+(i-r')}  + 1= \alpha^{(t-m)e + (j-r')}.
	\]
	Consequently,
	\[
	|(C_i + a) \cap (C_j + b)| = |(C_{i-r'}+1) \cap C_{j-r'}|,
	\]
	where the subscripts are calculated modulo $e$. The right-hand side of the above equation is exactly the cyclotomic number $(i-r',j-r')_e$. We have $q(q-1)$ choices for $(a,b)$ such that $a \ne b$, and the difference $a-b$ covers all the elements of $\F_q^*$ the same number of times. Consequently, each cyclotomic number $(i,j)_e$ that equals $N$ contributes with $q(q-1)$ to the multiplicity of the block intersection number $N$. Removing repeated intersections, this contribution reduces to $\frac{q(q-1)}{2}$.
\end{proof}

Using a result by \textcite[Theorems~2 and 4]{baumert1982}, the present authors~\cite{kaspers2019} showed that the cyclotomic numbers of order $p^r+1$ in $\F_{p^{2r}}$ are given as
\begin{align}
\label{eq:cyclonums_p^r+1}
	(0,0)_{p^r+1} &= p^r-2, \nonumber\\
	(0,i)_{p^r+1} = (i,0)_{p^r+1} = (i,i)_{p^r+1} &= 0 &&\textrm{for } i \ne 0,\\
	(i,j)_{p^r+1} &= 1 &&\textrm{for } i \ne j \textrm{ and } i,j \ne 0. \nonumber
\end{align}
With the help of \autoref{prop:cyclo_BIN} we can now determine the block intersection numbers of the $2$-$(p^{2r},p^r-1,p^r-2)$ design $dev(C)$ and their multiplicities. While the present authors~\cite{kaspers2019} presented these block intersection numbers before, the results about their multiplicities are new.

\begin{corollary}
	\label{cor:intnums_mult_1}
	The $2$-$(p^{2r},p^r-1,p^r-2)$ design $dev(C)$ has exactly the following block intersection numbers.
	\[
	\begin{array}{ll}
		\text{block intersection number}	&\text{multiplicity}\\\hline
		0		& \frac{1}{2}\left(3 p^{5r} + p^{4r} - 2 p^{3r}\right)\rule[.25em]{0em}{1em}\rule[-.75em]{0em}{0em}\\
		1		& \frac{1}{2}\left(p^{6r} - p^{5r} - p^{4r} + p^{3r}\right)\rule[-.75em]{0em}{0em}\\
		p^r-2	& \frac{1}{2}\left(p^{4r}-p^{2r}\right)
	\end{array}
	\]
\end{corollary}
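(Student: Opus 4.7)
The plan is to invoke \autoref{prop:cyclo_BIN} with $q = p^{2r}$ and $e = p^r+1$ (hence $f = p^r-1$), which identifies the block intersection numbers of $dev(C)$ as $0$ together with the distinct values attained by the cyclotomic numbers $(i,j)_{p^r+1}$ of $\F_{p^{2r}}$, and which expresses each multiplicity through $n_e(N)$. So the task reduces to computing $n_e(N)$ from the explicit list of cyclotomic numbers in \cref{eq:cyclonums_p^r+1}, and then substituting.

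First I would read off from \cref{eq:cyclonums_p^r+1} the three cases: a single pair $(0,0)$ contributes the value $p^r-2$; the $3p^r$ pairs of the form $(0,i)$, $(i,0)$ or $(i,i)$ with $i \neq 0$ all contribute $0$; and the remaining $(e-1)(e-2) = p^r(p^r-1)$ pairs with $i \ne j$ and $i,j \ne 0$ all contribute $1$. As a sanity check, these counts sum to $1 + 3p^r + p^{2r} - p^r = (p^r+1)^2 = e^2$, as they must. Therefore $n_e(0) = 3p^r$, $n_e(1) = p^{2r} - p^r$, $n_e(p^r-2) = 1$, and $n_e(N) = 0$ for all other $N$. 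In particular the only block intersection numbers of $dev(C)$ are $0$, $1$ and $p^r-2$.

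The second step is to substitute these values of $n_e(N)$, together with $q = p^{2r}$ and $e = p^r+1$, into the multiplicity formulas from \autoref{prop:cyclo_BIN}: $\frac{1}{2}q(q-1) n_e(N)$ for $N \ne 0$, and $\frac{1}{2}q(q-1) n_e(0) + \frac{1}{2}q e(e-1)$ for $N = 0$. A routine expansion and collection of terms produces exactly the three rows of the table, giving $\frac{1}{2}(3p^{5r} + p^{4r} - 2 p^{3r})$ for $N=0$, $\frac{1}{2}(p^{6r} - p^{5r} - p^{4r} + p^{3r})$ for $N = 1$, and $\frac{1}{2}(p^{4r} - p^{2r})$ for $N = p^r-2$.

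I do not foresee any real obstacle. The whole argument is a bookkeeping exercise on top of the two cited results: the only thing that requires attention is partitioning the $e^2$ index pairs $(i,j)$ by the value of $(i,j)_{p^r+1}$ correctly so that the $n_e(N)$'s are right; the remainder is polynomial arithmetic.
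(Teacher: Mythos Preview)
Your proposal is correct and follows essentially the same approach as the paper's proof: both invoke \autoref{prop:cyclo_BIN} with $q=p^{2r}$, $e=p^r+1$, read off $n_e(0)=3p^r$, $n_e(1)=p^r(p^r-1)$, $n_e(p^r-2)=1$ from \cref{eq:cyclonums_p^r+1}, and substitute into the multiplicity formulas. Your additional sanity check that the $n_e(N)$ sum to $e^2$ is a nice touch but not in the paper.
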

\begin{proof}
	Let $e = p^r+1$. Denote by $n_e(N)$ the number of cyclotomic numbers of order~$e$ that equal $N$. In $\F_{p^{2r}}$, according to \cref{eq:cyclonums_p^r+1}, we have
	\begin{align}
	\label{eq:Ne}
		n_e(0) = 3p^r,&& n_e(1) = p^r(p^r-1) &&\text{and}&& n_e(p^r-2) = 1.
	\end{align}
	We multiply these numbers with the factor $\frac{1}{2}p^{2r}({p^{2r}}-1)$ from \autoref{prop:cyclo_BIN}. This gives us the multiplicities of the block intersection numbers $1$ and $p^r-2$. To obtain the multiplicity for $0$, according to \autoref{prop:cyclo_BIN}, we additionally need to add  $\frac{1}{2}p^{2r}(p^r+1)p^r$.
\end{proof}

Next, we determine the cyclotomic numbers of order $2(p^r+1)$ in $\F_{p^{2r}}$ which are the intersection numbers of $dev(C^H)$. Unfortunately, these parameters no longer match the conditions of the theorems by \textcite{baumert1982} that were used to obtain the cyclotomic numbers of order $p^r+1$. Nevertheless, we can deduce these cyclotomic numbers from~\cref{eq:cyclonums_p^r+1} with the help of the following well-known lemma.

\begin{lemma}[{\cite[§67]{dickson1958}, \cite[Theorem~2]{ralston1979}}]
	\label{lem:squares+1}
	Let $p$ be an odd prime. Let $S$ be the set of nonzero squares and $N$ be the set of non-squares in the finite field $\F_{p^{r}}$. Denote by~$QQ$ the number of squares $s \in S$ for which $s+1$ is a nonzero square and by $QN$ the number of $s \in S$ for which $s+1$ is not a square. Moreover, let $NN$ denote the number of non-squares~$n \in N$ for which $n+1$ is not a square and $NQ$ the number of $n\in N$ for which $n+1$ is a nonzero square.
	\begin{itemize}
		\item If $p^r-1 \equiv 0 \pmod{4}$, then
		\begin{align*}
			QQ &= \frac{p^r-5}{4},	& QN &= \frac{p^r-1}{4}, & NN &= \frac{p^r-1}{4},	& NQ &= \frac{p^r-1}{4}.
		\end{align*}
		\item If $p^r-1 \equiv 2 \pmod{4}$, then
		\begin{align*}
			QQ &= \frac{p^r-3}{4},	& QN &= \frac{p^r+1}{4}, & NN &= \frac{p^r-3}{4},	& NQ &= \frac{p^r-3}{4}.
		\end{align*}
	\end{itemize}
\end{lemma}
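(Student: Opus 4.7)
The plan is to reduce the lemma to the solution of a small linear system in the four unknowns $QQ$, $QN$, $NQ$, $NN$. Three equations come from elementary counting, and the fourth is supplied by a single involution; the two cases modulo $4$ then follow by direct arithmetic.

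First I count the set $\{x \in \F_{p^r}^* : x+1 \in S\}$ by splitting according to whether $x \in S$ or $x \in N$. Writing $y = x + 1$, the condition $x \ne 0$ becomes $y \ne 1$, and since $1 \in S$ this excludes exactly one element of $S$, giving
\[
QQ + NQ = \frac{p^r-3}{2}.
\]
The same argument applied to $\{x \in \F_{p^r}^* : x+1 \in N\}$ yields $QN + NN = \frac{p^r-1}{2}$, since $1 \notin N$ imposes no further constraint. For the row sums I use the classical fact that $-1 \in S$ if and only if $p^r \equiv 1 \pmod{4}$: in the first case of the lemma this gives $QQ + QN = \frac{p^r-3}{2}$ and $NN + NQ = \frac{p^r-1}{2}$, while in the second case $-1 \in N$ and the two row sums are swapped.

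The decisive step is to produce a fourth independent equation. I consider the involution $\iota \colon x \mapsto x^{-1}$ on $\F_{p^r}^* \setminus \{-1\}$. Writing $\chi$ for the quadratic character of $\F_{p^r}$, one has $\chi(\iota(x)) = \chi(x)$ and
\[
\chi(\iota(x) + 1) = \chi\!\left(\frac{x+1}{x}\right) = \chi(x)\,\chi(x+1).
\]
Restricted to the subset $\{n \in N : n \ne -1\}$ the involution swaps the conditions $n + 1 \in S$ and $n + 1 \in N$, and hence $NN = NQ$. Combined with the three earlier identities this uniquely determines all four unknowns, and the resulting values match those in both cases of the lemma. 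The only bookkeeping point is to verify that $\iota$ is well-defined and involutive on $\F_{p^r}^* \setminus \{-1\}$ and that the excluded points $0$ and $-1$ do not produce spurious contributions; this is entirely routine, and I do not expect any serious obstacle in the argument.
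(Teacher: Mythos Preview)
Your argument is correct. The three counting identities together with $NN = NQ$ from the involution $x \mapsto x^{-1}$ determine all four quantities, and the arithmetic checks out in both cases. One small point worth making explicit in a final write-up: in the case $-1 \in N$, the excluded element $n = -1$ contributes to neither $NQ$ nor $NN$ (since $-1 + 1 = 0$ is a square but not a nonzero square), so removing it does not disturb the equality $NN = NQ$; you allude to this at the end, and it is indeed routine.

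As for comparison with the paper: the authors do not prove this lemma at all. It is stated as a classical result with citations to Dickson and Ralston, and then used as a black box. So there is no paper proof to compare against. Your self-contained argument via the quadratic-character identity $\chi(x^{-1}+1) = \chi(x)\chi(x+1)$ is a clean and standard way to obtain the fourth equation; the cited references derive the same numbers as the order-$2$ cyclotomic numbers, which amounts to essentially the same computation packaged differently.
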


Combining \autoref{lem:squares+1} with \cref{eq:cyclonums_p^r+1}, we obtain the following result.

\begin{proposition}
	\label{prop:cyclotomic_numbers_2(p^r+1)}
	Let $p$ be an odd prime, and let $e = p^r+1$ for some positive integer $r$. In $\F_{p^{2r}}$, the cyclotomic numbers of order $2e$ are as follows: 
	\begin{itemize}
		\item If $p^r-1 \equiv 0 \pmod{4}$, then
		\begin{align*}
			(0,0)_{2e} &= \frac{p^r-5}{4},\\
			(0,e)_{2e} = (e,0)_{2e} = (e,e)_{2e} &= \frac{p^r-1}{4}.
		\end{align*}
		\item If $p^r-1 \equiv 2 \pmod{4}$, then
		\begin{align*}
			(0,e)_{2e} &= \frac{p^r+1}{4},\\
			(0,0)_{2e} = (e,0)_{2e} = (e,e)_{2e} &= \frac{p^r-3}{4}.
		\end{align*}
	\end{itemize}
	In both of the above cases, 
	\begin{align*}
		(0,i)_{2e} &= (i,0)_{2e} = (i,i)_{2e} = (i,e)_{2e}\\
		&= (e,i)_{2e} = (i,e+i)_{2e} = (e+i,i)_{2e} = 0 &\text{for } i \notin \{0,e\}.
	\end{align*}
	Out of the remaining cyclotomic numbers, 
	\begin{align*}
		(i,j)_{2e}, (i,j+e)_{2e}, (i+e,j)_{2e}, (i+e,j+e)_{2e}, &&\mbox{where }i,j\ne 0 \mbox{ and } i \ne j,
	\end{align*}
	for each choice of $i$ and $j$, exactly one cyclotomic number is $1$ and the other three cyclotomic numbers are $0$, but it is not known which one is $1$.
\end{proposition}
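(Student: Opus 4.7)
The plan is to derive the order-$2e$ cyclotomic numbers from two pieces of information that are already available: the order-$e$ cyclotomic numbers listed in \cref{eq:cyclonums_p^r+1}, and the order-$2$ cyclotomic numbers in the subfield $\F_{p^r}$ provided by \autoref{lem:squares+1}. The bridge is the refinement identity
\[
(i,j)_e \;=\; (i,j)_{2e} + (i,j+e)_{2e} + (i+e,j)_{2e} + (i+e,j+e)_{2e},
\]
valid for $0 \le i,j \le e-1$ with indices on the right read modulo $2e$; it simply reflects the fact that each order-$e$ coset splits as $C_i^{(e)} = C_i^{(2e)} \cup C_{i+e}^{(2e)}$.

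First I would dispose of the vanishing entries. For any $i \in \{1,\dots,e-1\}$, \cref{eq:cyclonums_p^r+1} gives $(0,i)_e = (i,0)_e = (i,i)_e = 0$, and since cyclotomic numbers are non-negative the refinement identity forces each of the four summands on the right to vanish. This in one stroke produces all seven vanishing entries claimed in the proposition, namely $(0,i)_{2e} = (i,0)_{2e} = (i,i)_{2e} = (i,e)_{2e} = (e,i)_{2e} = (i,e+i)_{2e} = (e+i,i)_{2e} = 0$ for $i \notin \{0,e\}$. Applied instead to the remaining order-$e$ identity $(i,j)_e = 1$ (for $i \ne j$ with $i,j \ne 0$), the same non-negativity shows that exactly one of the four summands $(i,j)_{2e}, (i,j+e)_{2e}, (i+e,j)_{2e}, (i+e,j+e)_{2e}$ equals $1$ and the other three vanish, which is precisely the final clause of the proposition (the data at hand genuinely cannot distinguish which of the four is nonzero).

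The substantive case is $(0,0)_e = p^r - 2$, which only yields $(0,0)_{2e}+(0,e)_{2e}+(e,0)_{2e}+(e,e)_{2e} = p^r - 2$ and has to be refined by hand. The key observation is that the subgroup $C_0^{(e)}$ of $(p^r+1)$-th powers in $\F_{p^{2r}}^*$ has order $p^r - 1$, so by cyclicity of $\F_{p^{2r}}^*$ it must coincide with $\F_{p^r}^*$. Consequently $C_0^{(2e)}$, being the unique index-$2$ subgroup of $C_0^{(e)}$, is exactly the set of nonzero squares in $\F_{p^r}$; and since $p$ is odd the element $\alpha^e$ has order $p^r-1$ in $\F_{p^r}^*$ and is therefore a non-square there, so $C_e^{(2e)} = \alpha^e C_0^{(2e)}$ is the set of non-squares in $\F_{p^r}^*$. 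Under this identification each of $(0,0)_{2e}, (0,e)_{2e}, (e,0)_{2e}, (e,e)_{2e}$ is literally one of the four order-$2$ cyclotomic counts $QQ, QN, NQ, NN$ in $\F_{p^r}$ (the condition $s+1 \in \F_{p^r}^*$ required for $s+1$ to lie in $C_0^{(2e)}$ or $C_e^{(2e)}$ is automatic when $s \in \F_{p^r}^*$ and $s \ne -1$), and \autoref{lem:squares+1} then delivers the stated values in the two parity regimes $p^r - 1 \equiv 0, 2 \pmod 4$.

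I do not foresee a serious technical obstacle; non-negativity together with the subfield identification $C_0^{(e)} = \F_{p^r}^*$ do all the real work. The only care needed is the bookkeeping of matching each of the four summands $(0,0)_{2e}, (0,e)_{2e}, (e,0)_{2e}, (e,e)_{2e}$ to the appropriate one of $QQ, QN, NQ, NN$ consistently across the parity dichotomy, and verifying that the numerical values from \autoref{lem:squares+1} reproduce the expressions in the proposition on the nose.
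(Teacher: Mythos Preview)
Your proposal is correct and follows essentially the same approach as the paper: both identify $C_0^{(e)}$ with $\F_{p^r}^*$ so that $C_0^{(2e)}$ and $C_e^{(2e)}$ become the squares and non-squares of the subfield, invoke \autoref{lem:squares+1} for the four numbers $(0,0)_{2e},(0,e)_{2e},(e,0)_{2e},(e,e)_{2e}$, and then use the refinement identity $(i,j)_e=\sum (k,\ell)_{2e}$ together with non-negativity and \cref{eq:cyclonums_p^r+1} to handle the remaining cases. You are a bit more explicit than the paper about invoking non-negativity to force the zeros and the ``exactly one equals $1$'' conclusion, but the substance is identical.
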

\begin{proof}
	Let $\alpha$ be a generator of $\F_{p^{2r}}^*$, let $C_0$ be the unique subgroup of order $p^r-1$ of $\F_{p^{2r}}^*$ formed by the $(p^r+1)$-th powers, and let $C_0, C_1, \dots, C_{p^r}$ be the cosets of $C_0$. The finite field $\F_{p^{2r}}$ contains a unique subfield~$\F_{p^r}$ with $p^r$ elements. Hence, the group $C_0$ is the multiplicative group $\F_{p^r}^*$ of the subfield $\F_{p^r}$. As $p^r$ is odd, $C_0$ consists of $\frac{1}{2}(p^r-1)$~squares and non-squares in $\F_{p^{r}}$ each. Consequently, 
	\[
		C_0 = C_0^H \cup C^H_{e},
	\]
	where 
	\[
		C_0^H = \{\alpha^t\ |\ t \equiv 0 \pmod{2(p^r+1)}\}
	\]
	is the set of squares and 
	\[
		C^H_{e} = \{\alpha^t\ |\ t \equiv e \pmod{2(p^r+1)}\}
	\]
	is the set of non-squares in~$\F_{p^{r}}^*$. The values of the cyclotomic numbers $(i,j)_{2e}$, where $i,j \in \{0,e\}$, now follow from \autoref{lem:squares+1}. In the same way as before, we can divide each of the cosets~$C_0,C_1,\dots,C_{p^r}$, of~$C_0$ into two cosets $C_i^H$ and $C_{e+i}^H$ of $C_0^H$. Since 
	\[
		C_i = C_i^H \cup C^H_{e+i}
	\]
	for all $i = 0,1,\dots,p^r$, we obtain
	\[
		(C_i+1) \cap C_j
		= \bigcup_{\substack{k \in \{i,e+i\} \\ \ell \in \{j,e+j\}}} \left(C_k^H+1\right) \cap C_\ell^H
	\]
	for $0 \le i,j \le p^r$. In terms of cyclotomic numbers, this means
	\begin{align}
	\label{eq:cyclotomic_numbers_sum}
	(i,j)_e = \sum_{\substack{k \in \{i,e+i\} \\ \ell \in \{j,e+j\}}} (k,\ell)_{2e}
	\end{align}
	for $0 \le i,j \le p^r$. The values of the cyclotomic numbers $(i,j)_{2e}$, where $i,j \notin \{0,e\}$, now follow from combining \cref{eq:cyclotomic_numbers_sum} with \cref{eq:cyclonums_p^r+1}.  
\end{proof}
Unfortunately, the exact values of the cyclotomic numbers $(i,j)_{2e}$, $(i,j+e)_{2e}$, $(i+e,j)_{2e}$, $(i+e,j+e)_{2e}$, where $i,j\ne 0$ and $i \ne j$, in $\F_{p^{2r}}$ are not known in general. It is an open problem to determine those. However, \autoref{prop:cyclotomic_numbers_2(p^r+1)} immediately gives us the block intersection numbers of the $2$-design $dev(C^H)$ as well as their multiplicities.

\begin{theorem}
	\label{th:intnums_Fp2r}
	Let $C^H$ be a $\left(p^{2r}, \frac{p^r-1}{2}, \frac{p^r-3}{2}\right)$ difference family in the additive group of~$\F_{p^{2r}}$ constructed using \autoref{th:Fq_DDF}. The associated $2$-$\left(p^{2r}, \frac{p^r-1}{2}, \frac{p^r-3}{2}\right)$ design $dev(C^H)$ has exactly the following block intersection numbers.\\
	If $p^r-1 \equiv 0 \pmod{4}$, then
	\[
		\begin{array}{ll}
			\text{block intersection number}	&\text{multiplicity}\\\hline
			0		& \frac{1}{2}\left(3p^{6r} + 9p^{5r} + p^{4r} - 3p^{3r} + 2p^{2r}\right) \rule[.25em]{0em}{1em}\rule[-.75em]{0em}{0em}\\
			1		& \frac{1}{2}\left(p^{6r} - p^{5r} - p^{4r} + p^{3r}\right) \rule[-.75em]{0em}{0em}\\
			\frac{1}{4}(p^r-5)	& \frac{1}{2}\left(p^{4r}-p^{2r}\right) \rule[-.75em]{0em}{0em}\\
			\frac{1}{4}(p^r-1)	& \frac{1}{2}\left(3p^{4r}-3p^{2r}\right) \rule[-.75em]{0em}{0em}\\
		\end{array}.
	\]
	If $p^r-1 \equiv 2 \pmod{4}$, then
	\[
	\begin{array}{ll}
		\text{block intersection number}	&\text{multiplicity}\\\hline
		0		& \frac{1}{2}\left(3p^{6r} + 9p^{5r} + p^{4r} - 3p^{3r} + 2p^{2r}\right) \rule[.25em]{0em}{1em}\rule[-.75em]{0em}{0em}\\
		1		& \frac{1}{2}\left(p^{6r} - p^{5r} - p^{4r} + p^{3r}\right) \rule[-.75em]{0em}{0em}\\
		\frac{1}{4}(p^r-3)	& \frac{1}{2}\left(3p^{4r}-3p^{2r}\right) \rule[-.75em]{0em}{0em}\\
		\frac{1}{4}(p^r+1)	& \frac{1}{2}\left(p^{4r}-p^{2r}\right) \rule[-.75em]{0em}{0em}\\
	\end{array}.
	\]
\end{theorem}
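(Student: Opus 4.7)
The plan is to apply Proposition~\ref{prop:cyclo_BIN} with $q = p^{2r}$ and $e = 2(p^r+1)$, using the explicit values of the cyclotomic numbers of order $2(p^r+1)$ obtained in Proposition~\ref{prop:cyclotomic_numbers_2(p^r+1)}. The only real work is bookkeeping: for each value $N$ that the cyclotomic numbers $(i,j)_{2e}$ attain, I need to count $n_{2e}(N)$, the number of ordered pairs $(i,j) \in \{0,1,\dots,2e-1\}^2$ with $(i,j)_{2e} = N$, and then substitute into the formula from Proposition~\ref{prop:cyclo_BIN}.

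First, I partition the $(2e)^2 = 4(p^r+1)^2$ index pairs according to Proposition~\ref{prop:cyclotomic_numbers_2(p^r+1)}. The four pairs with $i,j \in \{0,e\}$ contribute the ``special'' values. In the case $p^r - 1 \equiv 0 \pmod{4}$, the pair $(0,0)$ yields the value $\frac{p^r-5}{4}$ and the three pairs $(0,e),(e,0),(e,e)$ yield $\frac{p^r-1}{4}$; in the case $p^r-1 \equiv 2 \pmod 4$ the roles are swapped, with $(0,e)$ giving $\frac{p^r+1}{4}$ once and the other three giving $\frac{p^r-3}{4}$. The pairs with exactly one index in $\{0,e\}$ (of which there are $2\cdot 2\cdot 2p^r = 8p^r$) all give cyclotomic number $0$, and so do the pairs $(i,i)$ and $(i,i+e)$ for $i \notin \{0,e\}$, contributing another $4p^r$ zeros.

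The remaining pairs fall into the quadruples $(i,j),(i,j+e),(i+e,j),(i+e,j+e)$ with $i,j \in \{1,\dots,e-1\}$ and $i \neq j$; there are $p^r(p^r-1)$ such ordered $(i,j)$, hence $4p^r(p^r-1)$ pairs, and in each quadruple exactly one cyclotomic number equals $1$ and the other three equal $0$. This gives $p^r(p^r-1)$ pairs of value $1$ and $3p^r(p^r-1)$ additional zeros. Summing, in both parity cases,
\[
n_{2e}(0) = 8p^r + 4p^r + 3p^r(p^r-1) = 3p^{2r} + 9p^r, \qquad n_{2e}(1) = p^r(p^r-1),
\]
and the counts of the two ``special'' values are $1$ and $3$. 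As a consistency check I verify $\sum_N n_{2e}(N) = 4(p^r+1)^2$.

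Finally, I substitute into $\frac{1}{2}q(q-1)\,n_{2e}(N)$ for $N \neq 0$ and into $\frac{1}{2}q(q-1)\,n_{2e}(0) + \frac{1}{2}qe(e-1)$ for $N = 0$, with $q = p^{2r}$ and $e(e-1) = 2(p^r+1)(2p^r+1)$, and simplify to match the tabulated multiplicities. The only step where a careless reader could slip is the zero count in $B \times B$: one must observe that $\{(i,i+e) : i \notin \{0,e\}\}$ already contains all pairs of the form $(e+i,i)$, so the list ``$(i,i+e)_{2e} = (e+i,i)_{2e} = 0$'' from Proposition~\ref{prop:cyclotomic_numbers_2(p^r+1)} should not be counted twice. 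This is really the only potential obstacle; once the pair-counting is correct, the rest is direct substitution and polynomial arithmetic.
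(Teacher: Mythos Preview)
Your proof is correct and follows essentially the same approach as the paper: apply Proposition~\ref{prop:cyclo_BIN} with $q=p^{2r}$ and index $2e=2(p^r+1)$, and read off the values and counts of the cyclotomic numbers $(i,j)_{2e}$ from Proposition~\ref{prop:cyclotomic_numbers_2(p^r+1)}. The only cosmetic difference is that the paper computes $n_{2e}(N)$ by starting from the already-tabulated $n_e(N)$ and using the splitting relation \cref{eq:cyclotomic_numbers_sum} (each order-$e$ number decomposes into four order-$2e$ numbers), whereas you count the index pairs directly; both routes yield $n_{2e}(0)=3p^{2r}+9p^r$, $n_{2e}(1)=p^r(p^r-1)$, and special-value counts $1$ and $3$, and from there the arithmetic is identical.
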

\begin{proof}
	Let $e = p^r+1$. It follows from \autoref{prop:cyclo_BIN} that the block intersection numbers of $dev(C^H)$ are exactly $0$ and the cyclotomic numbers from \autoref{prop:cyclotomic_numbers_2(p^r+1)}. We obtain their multiplicities using \cref{eq:cyclotomic_numbers_sum}: Every cyclotomic number of order $e$ that equals $0$ splits into four cyclotomic numbers of order $2e$ that equal~$0$. Every cyclotomic number of order $e$ that takes the value $1$ splits into three cyclotomic numbers of order $2e$ that equal $0$ and one cyclotomic number of order $2e$ that equals $1$. If $p^r-1 \equiv 0 \pmod{4}$, the unique cyclotomic number of order $e$ that equals $p^r-2$ splits into one cyclotomic number of order~$2e$ that equals $\frac{1}{4}(p^r-5)$ and three cyclotomic numbers of order~$2e$ that equal $\frac{1}{4}(p^r-1)$. If $p^r-1 \equiv 2 \pmod{4}$, then we obtain $\frac{1}{4}(p^r-3)$ three times and $\frac{1}{4}(p^r+1)$ once.\par
	Denote by $n_e(N)$ the number of cyclotomic numbers of order $e$ that equal~$N$. These numbers were given in \cref{eq:Ne}. By the above argumentation, we obtain the following values for~$n_{2e}(N)$. If $p^r-1 \equiv 0 \pmod{4}$, then
	\begin{align}
		\label{eq:N2e0}
		n_{2e}(0) &= 4n_e(0) + 3 n_e(1),& n_{2e}(1) &= n_e(1) \nonumber\\
		n_{2e}((p^r-5)/4) &= n_e(p^r-2), & n_{2e}((p^r-1)/4) &= 3n_{e}(p^r-2).
	\end{align}
	If $p^r-1 \equiv 2 \pmod{4}$, then
	\begin{align}
		\label{eq:N2e2}
		n_{2e}(0) &= 4n_e(0) + 3 n_e(1),& n_{2e}(1) &= n_e(1) \nonumber\\
		n_{2e}((p^r-3)/4) &= 3n_e(p^r-2), & n_{2e}((p^r+1)/4) &= n_{e}(p^r-2).
	\end{align}	
	From \autoref{prop:cyclo_BIN}, it follows that we need to multiply these numbers with $\frac{1}{2}p^{2r}(p^{2r}-1)$ to obtain the multiplicities of the respective block intersection numbers. For the block intersection number $0$ we additionally need to add $\frac{1}{2}p^{2r}(2p^{2r}+2)(2p^{2r}+1)$.
\end{proof}

Next, we examine the intersection numbers of $dev(E^H)$, the design associated to the disjoint difference family $E^H$ in the Galois ring $\GR(p^2,r)$ from \autoref{th:squares_GR_DDF}. Since the design $dev(E^H)$ is constructed by letting the additive group of $\GR(p^2,r)$ act on the difference family $E^H$, there is a strong connection between differences and block intersection numbers: Let $E^H_i, E^H_j \in E^H$ be two distinct base blocks of $dev(E^H)$, and let~$d$~be a difference occurring $N_d$ times in the multiset $E^H_i - E^H_j$. Then, $N_d$ is the block intersection number $|E^H_i \cap (E^H_j+d)|$ of the blocks $E_i^H$ and $E_j^H+d$ of $dev(E^H)$. Hence, to calculate the block intersection number $|E^H_i \cap (E^H_j+d)|$ we need to calculate the multiplicity $N_d$ of $d$ in $E^H_i - E^H_j$. We will do exactly this for certain base blocks of $dev(E^H)$.\par
Let $\xi$ be a generator of the Teichmüller group $\T^*$ and let $\T = \T^* \cap \{0\}$. As in \autoref{th:squares_GR_DDF}, we denote by~$\T_S^*$ the subgroup of Teichmüller squares and by $\T_N^*$ the set of Teichmüller non-squares. Furthermore, we call a coset of type 
\[
	(1+p\alpha)\T_S^*, 
\]
where $\alpha \in \T$, a \emph{square coset of $\T_S^*$}, and a coset of type
\[
	(1+p\alpha) \T_N^* = (1+p\alpha)\xi\T_S^*,
\] 
where $\alpha \in \T$, a \emph{non-square coset of $\T_S^*$}. In the remaining part of this section, we will establish bounds on block intersection numbers of $dev(E^H)$ that come from the multisets $\Delta \T_S^*$ and $\T_S^*-\T^*_N$. We begin by analyzing the structure of these multisets.

\begin{lemma}
	\label{lem:number_of_SN_cosets}
	Let $p$ be an odd prime. Using the same notation as above, consider the multisets $\Delta \T_S^*$ and $\T_S^* - \T_N^*$ in the Galois ring $\GR(p^2,r)$.
	\begin{itemize}
		\item If $p^r-1 \equiv 0 \pmod{4}$, then $\Delta \T_S^*$ contains $\frac{p^r-5}{4}$ square cosets and $\frac{p^r-1}{4}$~non-square cosets of $\T_S^*$, and $\T_S^*-\T_N^*$ contains $\frac{p^r-1}{4}$ square and non-square cosets of $\T_S^*$ each.
		\item If $p^r-1 \equiv 2 \pmod{4}$, then $\Delta \T_S^*$ contains $\frac{p^r-3}{4}$ square and non-square cosets of $\T_S^*$ each, and $\T_S^*-\T_N^*$ contains $\frac{p^r-3}{4}$ square cosets and $\frac{p^r+1}{4}$ non-square cosets of~$\T_S^*$.			
	\end{itemize}
\end{lemma}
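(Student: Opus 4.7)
The plan is to reduce the question of coset structure to the classical binary-cyclotomy counts $QQ, QN, NQ, NN$ of \autoref{lem:squares+1}, exploiting multiplicative invariance of the two multisets and the reduction map modulo $\I$.

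First I would observe that $\T_S^*$ is a multiplicative subgroup of $\GR(p^2,r)^*$ and that multiplication by any $t \in \T_S^*$ is additive on the whole ring. It therefore permutes each of $\Delta \T_S^*$ and $\T_S^* - \T_N^*$ as a multiset, so both decompose into $\T_S^*$-cosets with constant multiplicity on every coset. Using the direct-product structure $\GR(p^2,r)^* = \T^* \times \mathbb{P}$ recalled in \autoref{sec:Galois_rings}, a unit $c = t(1+p\alpha)$ with $t \in \T^*$ and $\alpha \in \T$ reduces mod $\I$ to $\bar t$. Hence $c$ lies in a square coset $(1+p\alpha')\T_S^*$ iff its image in $\F_{p^r}$ is a nonzero square, and in a non-square coset iff its image is a non-square. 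Let $S, N \subseteq \F_{p^r}^*$ denote the sets of nonzero squares and non-squares, respectively.

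Next, fix $u \in \F_{p^r}^*$ and count. Pairs $(s,s') \in \T_S^* \times \T_S^*$ with $\overline{s-s'} = u$ correspond via the Teichmüller bijection $\T^* \cong \F_{p^r}^*$ to pairs $(a,b) \in S \times S$ with $a-b = u$. Substituting $a = ua'$, $b = ub'$ turns the condition into $a' - b' = 1$ with $a',b'$ both in $S$ (if $u \in S$) or both in $N$ (if $u \in N$); this gives $QQ$ and $NN$, respectively. The analogous substitution for $\T_S^* - \T_N^*$ yields $NQ$ when $u \in S$ and $QN$ when $u \in N$.

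Summing over $u \in S$ and over $u \in N$ separately and dividing by $|\T_S^*| = |S| = (p^r-1)/2$ then converts pair counts into a number of $\T_S^*$-cosets (weighted by the constant per-coset multiplicity of the multiset): $\Delta \T_S^*$ contributes $QQ$ square cosets and $NN$ non-square cosets, while $\T_S^* - \T_N^*$ contributes $NQ$ square cosets and $QN$ non-square cosets. Plugging in the explicit values of $QQ, QN, NQ, NN$ from \autoref{lem:squares+1} in the two residue classes of $p^r - 1$ modulo $4$ then produces exactly the numbers in the statement.

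I do not expect a serious obstacle: the proof is bookkeeping once the reduction-mod-$\I$ viewpoint is set up. The only subtle point is that the "number of cosets" has to be read as a count weighted by the constant per-coset multiplicity; but the totals $QQ+NN$ and $NQ+QN$ multiplied by $|\T_S^*|$ recover $|\Delta \T_S^*|$ and $|\T_S^*-\T_N^*|$ in both residue cases, so the accounting is internally consistent.
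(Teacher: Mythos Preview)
Your proposal is correct and follows essentially the same route as the paper: reduce modulo $\I$ to $\F_{p^r}$, use the substitution $a=ua',\,b=ub'$ (the paper writes it as $sd^{-1}-s'd^{-1}=1$) to convert difference equations to the ``$+1$'' form, and read off the coset counts as $QQ,NN$ for $\Delta\T_S^*$ and $NQ,QN$ for $\T_S^*-\T_N^*$ via \autoref{lem:squares+1}. Your explicit mention of $\T_S^*$-invariance to justify that cosets occur with constant multiplicity is a detail the paper leaves implicit, but otherwise the arguments coincide.
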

\begin{proof}
	Denote by $\I$ the maximal ideal of $\GR(p^2,r)$. The Teichmüller set $\T$ is a system of representatives of $\GR(p^2,r) / \I$ which is isomorphic to the finite field $\F_{p^{r}}$. Hence, the sets of Teichmüller squares~$\T_S^*$ and Teichmüller non-squares~$\T_N^*$ act in the same way as the respective sets of squares and non-squares in $\F_{p^r}$. The result now follows from \autoref{lem:squares+1}: Let $d = s - s'$ be the difference of two distinct nonzero squares $s,s'$ in $\F_{p^r}$. Equivalently, 
	\[
	sd^{-1}-s'd^{-1} = 1.
	\]
	Note that $d^{-1}$ is a square if and only if $d$ is a square. Using the notation from \autoref{lem:squares+1}, the equation $s-s' = d$ has $QQ$ solutions for $s,s'$ if $d$ is a square, and $NN$ solutions if $d$ is a non-square. Analogously, we obtain the number of solutions for $s,n$ of $s-n=d$, where $s$ is a nonzero square and $n$ is a non-square in $\F_{p^r}$.
\end{proof}

Furthermore, we need the following properties of squares and non-squares in the Galois ring~$\GR(p^2,r)$.
\begin{proposition}
	\label{prop:S_N_properties}
	Consider the Galois ring $\GR(p^2,r)$, where $p$ is odd. Denote by $\T_S^*$ the set of Teichmüller squares and by $\T_N^*$ the set of Teichmüller non-squares.
	\begin{enumerate}
		\item If $p^r-1 \equiv 0 \pmod{4}$, then $-1$ is a Teichmüller square, and $\T_S^* = -\T_S^*$ and $2\T_S^* \subseteq \Delta \T_S^*$.\par
		If $p^r-1 \equiv 2 \pmod{4}$, then $-1$ is a Teichmüller non-square, and $\T_S^* = -\T_N^*$ and $2\T_S^* \subseteq \T_S^*-\T_N^*$.
		\item If $p^r-1 \equiv 0 \pmod{12}$, then $1 \in \Delta \T_S^*$, and $\T_S^* \subseteq \Delta \T_S^*$.\par
		If $p^r-1 \equiv 6 \pmod{12}$, then $1 \in \T_N^* - \T_S^*$, and $\T_S^* \subseteq \T_N^*-\T_S^*$.
		\item If $p^r-1 \equiv 0 \mbox{ or } 6 \pmod{8}$, then $2$ is a square, and $2\T_S^*$ is a square coset of~$\T_S^*$.\par
		If $p^r-1 \equiv 2 \mbox{ or } 4 \pmod{8}$, then $2$ is a non-square, and $2\T_S^*$ is a non-square coset of $\T_S^*$.
	\end{enumerate}
\end{proposition}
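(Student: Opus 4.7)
The plan is to prove the three claims in order, each exploiting the cyclic structure of the Teichmüller group $\T^* = \langle \xi \rangle$ of order $p^r - 1$, in which $\xi^k$ is a square iff $k$ is even, together with the fact that multiplication by a Teichmüller square preserves, and by a non-square exchanges, the cosets $\T_S^*$ and $\T_N^*$.

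First, for (1), the element $-1$ is the unique involution of $\T^*$, so $-1 = \xi^{(p^r-1)/2}$, which lies in $\T_S^*$ precisely when $4 \mid p^r - 1$. In that case $(-1)\T_S^* = \T_S^*$, while in the complementary case $-1 \in \T_N^*$ gives $(-1)\T_S^* = \T_N^*$, yielding $\T_S^* = -\T_S^*$ and $\T_S^* = -\T_N^*$ respectively. The identity $2s = s - (-s)$, valid for every $s \in \T_S^*$ (with $s \neq -s$ since $s \neq 0$ and $2$ is a unit of $\GR(p^2,r)$), then places $2s$ in $\Delta \T_S^*$ in the first case and in $\T_S^* - \T_N^*$ in the second.

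Next, (2) is the heart of the argument and I expect to be the main obstacle. Since both hypotheses force $6 \mid p^r - 1$, the group $\T^*$ contains a primitive cube root of unity $\omega = \xi^{(p^r-1)/3}$. Factoring $\omega^3 - 1 = (\omega - 1)(1 + \omega + \omega^2) = 0$ and observing that $\omega - 1$ is a unit in $\GR(p^2,r)$ (it projects to a nonzero element of $\F_{p^r}$), we deduce $1 + \omega + \omega^2 = 0$, equivalently
\[
1 = (-\omega) - \omega^2.
\]
Lifting this identity from $\F_{p^r}$ to $\GR(p^2, r)$ via the unit property of $\omega - 1$ is the key step. The exponent $2(p^r - 1)/3$ of $\omega^2$ is even, so $\omega^2 \in \T_S^*$ always, and the exponent $(p^r - 1)/3$ of $\omega$ is even under either hypothesis, so $\omega \in \T_S^*$ as well; hence $-\omega$ has the same quadratic character as $-1$ as determined in (1). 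For $p^r - 1 \equiv 0 \pmod{12}$ this gives $-\omega \in \T_S^*$ and so $1 \in \Delta \T_S^*$, while for $p^r - 1 \equiv 6 \pmod{12}$ we get $-\omega \in \T_N^*$ and so $1 \in \T_N^* - \T_S^*$. Multiplying the identity by an arbitrary $t \in \T_S^*$ preserves the coset types of both summands and upgrades the containment to $\T_S^* \subseteq \Delta \T_S^*$ and $\T_S^* \subseteq \T_N^* - \T_S^*$, respectively.

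Finally, for (3), since $p$ is odd the group of principal units $\mathbb{P} = 1 + p\T$ has odd order $p^r$, so every principal unit is a square; consequently a unit $u = t(1 + p\alpha)$ of $\GR(p^2, r)$ is a square iff $t \in \T_S^*$. Writing $2$ in its unique representation $2 = t_2(1 + p\alpha_2)$ with $t_2 \in \T^*$ and $\alpha_2 \in \T$, squareness of $2$ therefore reduces to whether $2 \bmod p$ is a quadratic residue in $\F_{p^r}$. Euler's criterion combined with $p^2 \equiv 1 \pmod 8$ and a short case split on $r \bmod 2$ and $p \bmod 8$ shows this happens precisely when $p^r \equiv \pm 1 \pmod 8$, that is, $p^r - 1 \equiv 0$ or $6 \pmod 8$. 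The coset claim then follows from $2\T_S^* = (1 + p\alpha_2)(t_2 \T_S^*)$, which equals $(1 + p\alpha_2)\T_S^*$ (a square coset) or $(1 + p\alpha_2)\T_N^*$ (a non-square coset) according as $t_2 \in \T_S^*$ or $t_2 \in \T_N^*$.
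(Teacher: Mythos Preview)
Your proof is correct and follows essentially the same strategy as the paper: in each part you locate the relevant element ($-1$, or the root-of-unity identity, or the Teichmüller component of~$2$) and read off its quadratic character from the parity of its exponent in $\T^*=\langle\xi\rangle$. Your derivation of the identity $1=(-\omega)-\omega^2$ via cube roots is a slight streamlining of the paper's sixth-root computation (which arrives at the same equation $\xi^{5(p^r-1)/6}-\xi^{2(p^r-1)/3}=1$), and in part~(3) you reduce directly to $\F_{p^r}$ whereas the paper passes through the subring $\GR(p^2,1)=\Z_{p^2}$ first, but these are cosmetic differences rather than genuinely different approaches.
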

\begin{proof}
	Let $\xi$ be a generator of the Teichmüller group $\T^*$ in the Galois ring~$\GR(p^2,r)$.
	\begin{enumerate}
		\item The present authors \cite{kaspers2019} proved that if $p$ is odd, $-1$ is contained in $\T^*$, in particular $-1 = \xi^{\frac{1}{2}(p^r-1)}$. The exponent $\frac{1}{2}(p^r-1)$ is even if $p^r-1 \equiv 0 \pmod{4}$, then $-1$ is a square in $\T^*$. If $p^r-1 \equiv 2 \pmod 4$, the exponent $\frac{1}{2}(p^r-1)$ is odd, hence $-1$ is a non-square in $\T^*$.
		
		\item If $p^r-1 \equiv 0 \pmod{6}$, the equation $x^6 = 1$ has exactly six solutions in the Teichmüller group $\T^*$, namely $\xi^{k(p^r-1)/6}$, where $k \in \{0,1,\dots,5\}.$ We show that the sum of these elements is $0$. It is easy to see that
		\[
			\xi^{(p^r-1)/6} \sum_{k=0}^{5} \xi^{k(p^r-1)/6} = \sum_{k=0}^{5} \xi^{k(p^r-1)/6}.
		\]
		Hence,
		\[
			\left(\xi^{(p^r-1)/6} - 1\right)\sum_{k=0}^{5} \xi^{k(p^r-1)/6}   = 0.
		\]
		As we have shown in the proof of \autoref{th:squares_GR_DDF}, the element $\xi^{(p^r-1)/6} - 1$ is a unit. It follows that
		\begin{align}
		\label{eq:rootsofunity}
			\sum_{k=0}^{5} \xi^{k(p^r-1)/6}   = 0.
		\end{align}
		By the same reasoning, $\sum_{k=0}^{2} \xi^{k(p^r-1)/3}   = 0$. Consequently, we can rewrite \cref{eq:rootsofunity} as 
		\[
			\xi^{5(p^r-1)/6}-\xi^{2(p^r-1)/3} = 1.
		\]
		If $p^r-1 \equiv 0 \pmod {12}$, the elements $\xi^{5(p^r-1)/6}$ and $\xi^{2(p^r-1)/3}$ are squares and, consequently, $1 \in \Delta \T_S^*$. If $p^r-1 \equiv 6 \pmod {12}$, then $\xi^{5(p^r-1)/6}$ is a non-square and $\xi^{2(p^r-1)/3}$ is a square, hence $1 \in \T_N^*-\T_S^*$.
		
		\item We first consider $r=1$. Note that $\GR(p^2,1) = \Z_{p^2}$. The following classical results about quadratic residues were first systematically given by \textcite{gauss1981}. An element~$a$ relatively prime to an odd prime $p$ is a square in $\Z_{p^m}$ if and only if $a$ is a square in~$\Z_p$. In~$\Z_p$, the element~$2$ is a square if $p-1 \equiv 0$ or $6 \pmod{8}$, and $2$ is a non-square if $p-1 \equiv 2$ or $4 \pmod{8}$. This solves the problem for $r=1$.\par
		
		Now, let $r \ge 2$. Let 
		\[
		\T_1^* = \{1, \zeta,\zeta^2, \dots, \zeta^{p-2}\}
		\]
		denote the Teichmüller group of $\GR(p^2,1)$, and let $\T_1 = \T_1^* \cup \{0\}$. For a fixed prime~$p$, the Galois ring $\GR(p^2,1)$ is a subring of $\GR(p^2,r)$ for all~$r \ge 1$. If $\T^* = \langle \xi \rangle$ denotes the Teichmüller group of $\GR(p^2,r)$, then $\T_1^*$ is a subgroup of $\T^*$ and we write
		\[
		\T_1^* = \left\lbrace 1, \xi^{(p^r-1)/(p-1)}, \xi^{2(p^r-1)/(p-1)}, \dots, \xi^{(p-2)(p^r-1)/(p-1)}\right\rbrace,
		\]
		where $\zeta^k = \xi^{k(p^r-1)/(p-1)}$. Since $2$ is a unit in $\GR(p^2,1)$, we write $2 = (1+p\alpha_0)\alpha_1$ for unique~$\alpha_0, \alpha_1$, where $\alpha_0 \in \T_1$ and $\alpha_1 \in \T_1^* $. It follows that $\alpha_1 = \zeta^\ell$ for some $\ell \in \{0,1,\dots,p-2\}$. In $\GR(p^2,r)$, we consequently obtain
		\[
		2= \left(1+p\alpha_0\right)\xi^{\ell(p^r-1)/(p-1)}.
		\]
		Hence, $2$ is a square, and thereby $2\T_S^*$ is a square coset of $\T_S^*$, if at least one of the two numbers $\ell$ and $(p^r-1)/(p-1)$ is even. The second number is even if and only if $r$ is even. In this case, $p^r -1 \equiv 0 \pmod 8$. Hence, if $r$ is odd, the number $\ell$ needs to be even. This is the case if and only if~$2$ is a square in $\GR(p^2,1)$, which, according to the case $r=1$, holds whenever $p-1 \equiv 0$ or~$6 \pmod{8}$. If~$r$ is odd, $p^r \equiv p \pmod{8}$. The result follows.
	\end{enumerate}
	\vspace{-2.2em}
\end{proof}	

By combining all three results from \autoref{prop:S_N_properties}, we obtain the following corollary:

\begin{corollary}
	\label{cor:mod24}
	Consider the Galois ring $\GR(p^2,r)$, where $p$ is odd. Denote by $\T_S^*$ the set of Teichmüller squares and by $\T_N^*$ the set of Teichmüller non-squares.
	\begin{itemize}
		\item If $p^r-1 \equiv 0 \pmod{12}$, then the multiset $\Delta \T_S^*$ contains both $\T_S^*$ and $2\T_S^*$, and the set $2\T_S^*$ is a square coset of $\T_S^*$ if and only if $p^r-1 \equiv 0 \pmod{24}$.
		\item If $p^r-1 \equiv 6 \pmod{12}$, then the multiset $\T_S^*-\T_N^*$ contains both $\T_N^*$ and $2\T_S^*$, and the set $2\T_S^*$ is a non-square coset of $\T_S^*$ if and only if $p^r-1 \equiv 18 \pmod{24}$.
	\end{itemize}
\end{corollary}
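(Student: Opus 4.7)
The plan is to deduce this corollary by combining the three parts of \autoref{prop:S_N_properties}, after refining the hypotheses modulo~$24$. Throughout, I would use that any congruence $p^r-1 \equiv a \pmod{12}$ splits into exactly two sub-cases modulo $24$, namely $a$ and $a+12$, and that these sub-cases are distinguished by the residue modulo~$8$.

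For the first bullet, the hypothesis $p^r-1 \equiv 0 \pmod{12}$ implies both $p^r-1 \equiv 0 \pmod 4$ and $p^r-1 \equiv 0 \pmod 6$. Part~1 of \autoref{prop:S_N_properties} then gives $2\T_S^* \subseteq \Delta \T_S^*$, while part~2 gives $\T_S^* \subseteq \Delta \T_S^*$, proving the first assertion. For the biconditional, I would split the hypothesis into $p^r-1 \equiv 0 \pmod{24}$, which forces $p^r-1 \equiv 0 \pmod 8$, and $p^r-1 \equiv 12 \pmod{24}$, which forces $p^r-1 \equiv 4 \pmod 8$. By part~3, the element $2$ is a square exactly in the former sub-case, so $2\T_S^*$ is a square coset of $\T_S^*$ if and only if $p^r-1 \equiv 0 \pmod{24}$.

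For the second bullet, $p^r-1 \equiv 6 \pmod{12}$ gives $p^r-1 \equiv 2 \pmod 4$, so part~1 yields $\T_S^* = -\T_N^*$ and $2\T_S^* \subseteq \T_S^* - \T_N^*$. Part~2 gives $\T_S^* \subseteq \T_N^* - \T_S^*$, and here I would use the multiset identity $\T_S^* - \T_N^* = -(\T_N^* - \T_S^*)$ together with $-\T_S^* = \T_N^*$ (from part~1) to transfer this to $\T_N^* \subseteq \T_S^* - \T_N^*$. The biconditional is handled exactly as above: the sub-cases $p^r-1 \equiv 6,18 \pmod{24}$ correspond to residues $6,2 \pmod 8$, and part~3 shows that $2$ is a non-square exactly in the second sub-case.

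The only real obstacle I anticipate is the bookkeeping around the directed multiset $\T_S^* - \T_N^*$ versus $\T_N^* - \T_S^*$ in the second bullet; everything else is a direct chase through the three parts of \autoref{prop:S_N_properties} combined with a routine case split modulo~$24$.
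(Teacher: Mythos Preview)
Your proposal is correct and follows exactly the route the paper intends: the corollary is stated as an immediate combination of the three parts of \autoref{prop:S_N_properties}, and your case split modulo~$24$ together with the negation step $\T_S^*-\T_N^*=-(\T_N^*-\T_S^*)$ and $-\T_S^*=\T_N^*$ is precisely the bookkeeping needed to make that combination explicit.
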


Note that $p^r-1 \equiv 0 \pmod{24}$ holds whenever the prime $p \ge 5$ and $r$ is even. To continue, we need the following result about when $2$ is a Teichmüller square.
\begin{lemma}
	\label{lem:Ts=2Ts}
	Consider the Galois ring $\GR(p^2,r)$, where $p$ is odd. Denote by $\T_S^*$ the set of Teichmüller squares and by $\T_N^*$ the set of Teichmüller non-squares.
	\begin{itemize}
		\item If both $p^r-1 \equiv 0$ or $6 \pmod{8}$ and $2^{p-1} \equiv 1 \pmod{p^2}$, then $\T_S^* = 2\T_S^*$.
		\item If both $p^r-1 \equiv 2$ or $4 \pmod {8}$ and $2^{p-1} \equiv 1 \pmod{p^2}$, then $\T_N^* = 2\T_S^*$.
	\end{itemize}
\end{lemma}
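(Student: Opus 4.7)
The plan is to reduce both statements to the single observation that, under the Wieferich-type condition $2^{p-1}\equiv 1\pmod{p^2}$, the element $2$ itself already lies in the Teichm\"uller group $\T^*$ of $\GR(p^2,r)$. Granting this, the unique decomposition $2=(1+p\alpha_0)\alpha_1$ with $\alpha_0\in\T$ and $\alpha_1\in\T^*$ (from \autoref{sec:Galois_rings}) is forced to have $\alpha_0=0$, so $2=\alpha_1$. Part~(3) of \autoref{prop:S_N_properties} then identifies whether this $\alpha_1$ lies in $\T_S^*$ or in $\T_N^*$ according to the residue of $p^r-1$ modulo~$8$, and the desired equalities $\T_S^*=2\T_S^*$ or $\T_N^*=2\T_S^*$ follow at once.

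For the core step I would invoke the structure of the unit group recorded in \autoref{sec:Galois_rings}: we have the internal direct product $\GR(p^2,r)^*=\T^*\times\mathbb{P}$ with $|\mathbb{P}|=p^r$ and $|\T^*|=p^r-1$. Since $\gcd(p^r,p^r-1)=1$, a unit of $\GR(p^2,r)$ lies in $\T^*$ if and only if its multiplicative order divides $p^r-1$, equivalently if and only if it satisfies $x^{p^r-1}=1$. It therefore suffices to verify $2^{p^r-1}=1$ in $\GR(p^2,r)$. Since $\Z_{p^2}=\GR(p^2,1)$ embeds as the prime subring of $\GR(p^2,r)$, the hypothesis $2^{p-1}\equiv 1\pmod{p^2}$ reads as $2^{p-1}=1$ in $\GR(p^2,r)$; raising to the integer power $(p^r-1)/(p-1)$ immediately gives $2^{p^r-1}=1$, whence $2\in\T^*$.

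To finish, I combine $2\in\T^*$ with \autoref{prop:S_N_properties}(3). If $p^r-1\equiv 0$ or $6\pmod{8}$, that proposition says $2\T_S^*$ is a square coset of $\T_S^*$, meaning the unique decomposition $2=(1+p\alpha_0)\alpha_1$ satisfies $\alpha_1\in\T_S^*$; combined with $\alpha_0=0$, this yields $2\in\T_S^*$ and hence $2\T_S^*=\T_S^*$. The analogous argument in the case $p^r-1\equiv 2$ or $4\pmod{8}$ gives $2\in\T_N^*=\xi\T_S^*$, so $2\T_S^*=\T_N^*$.

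The main obstacle I expect is purely one of bookkeeping: one must be careful that the phrase ``$2$ is a square'' in \autoref{prop:S_N_properties}(3) really refers to the Teichm\"uller part $\alpha_1$ of $2$, not to $2$ itself being a square in $\GR(p^2,r)^*$; without the Wieferich hypothesis these two notions differ by a principal-unit factor $1+p\alpha_0$. It is precisely the assumption $2^{p-1}\equiv 1\pmod{p^2}$ that collapses this distinction, which also explains why it appears as an additional hypothesis alongside the congruence condition on $p^r-1$.
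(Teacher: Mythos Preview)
Your proof is correct and follows essentially the same line as the paper's: both argue that $\T_S^*=2\T_S^*$ (respectively $\T_N^*=2\T_S^*$) amounts to $2$ lying in $\T_S^*$ (respectively $\T_N^*$), reduce membership $2\in\T^*$ to the order condition $2^{p^r-1}=1$, and then pass to the prime subring $\Z_{p^2}$ to obtain the Wieferich condition $2^{p-1}\equiv 1\pmod{p^2}$, invoking \autoref{prop:S_N_properties}(3) for the square/non-square dichotomy. Your version is slightly more explicit in spelling out the direct-product argument and the exponent $(p^r-1)/(p-1)$, but there is no substantive difference.
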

\begin{proof}
	The equation $\T_S^* = 2\T_S^*$ holds if and only if $2$ is a square in the Teichmüller group~$\T^*$. According to \autoref{prop:S_N_properties}, the element $2$ is a square in $\GR(p^2,r)$ if and only if $p^r-1 \equiv 0$ or $6 \pmod{8}$. Since $\T^*$ has order $p^r-1$, the element $2$ is contained in $\T^*$ if and only if $2^{p^r-1} \equiv 1 \pmod{p^2}$. Since $2$ is also an element of $\Z_{p^2} = \GR(p^2,1)$ which is a subring of $\GR(p^2,r)$, this condition can be reduced to $2^{p-1} \equiv 1 \pmod{p^2}$.\par
	On the other hand, the equation $\T_N^* = 2\T_S^*$ holds if and only if $2$ is a non-square and $2 \in \T^*$. The second statement now follows by analogous reasoning as above from \autoref{prop:S_N_properties}.
\end{proof}

Primes that solve $2^{p-1} \equiv 1 \pmod{p^2}$ are called \emph{Wieferich primes}. So far, the only known Wieferich primes are $1093$ and $3511$. Thus, the only known Galois rings of characteristic~$p^2$ satisfying $\T_S^* = 2 \T_S^*$ are $\GR(1093^2,r)$, where $r$ is even, and $\GR(3511^2,r)$ for arbitrary $r$. The only known Galois ring of characteristic~$p^2$ satisfying $\T_N^* = 2 \T_S^*$ is $\GR(1093^2,r)$, where $r$ is odd.

With the help of the first result given in \autoref{prop:S_N_properties}, we now establish a lower bound on the multiplicities of certain differences of Teichmüller elements. This is an analogue of the present authors' previous result \cite[Lemma~5.9]{kaspers2019} for the design $dev(C)$ from \autoref{th:Davis_EDF}:

\begin{lemma}
	\label{lem:lowerbound}
	Consider the Galois ring $\GR(p^2,r)$, where $p$ is odd. Denote by $\T_S^*$ the set of Teichmüller squares and by $\T_N^*$ the set of Teichmüller non-squares.
	\begin{itemize}
		\item If $p^r-1 \equiv 0 \pmod{4}$, then all differences~$d \in \Delta \T_S^*$ where $d \notin 2\T_S^*$ have multiplicity $N_d > 1$ in $\Delta \T_S^*$. 
		\item If $p^r-1 \equiv 2 \pmod{4}$, then all differences~$d \in \T_S^*-\T_N^*$ where $d \notin 2\T_S^*$ have mulitplicity $N_d > 1$ in $\T_S^* - \T_N^*$. 
	\end{itemize}
\end{lemma}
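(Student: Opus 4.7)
The plan is to show that, given any single representation of $d$ as a difference of the appropriate type, the involution $\phi\colon (x,y) \mapsto (-y,-x)$ on ordered pairs produces a second representation, distinct from the first unless $d$ lies in the exceptional set $2\T_S^*$. The setup is that $N_d$ counts ordered pairs $(x,y) \in \T_S^* \times \T_S^*$ with $x \ne y$ and $x - y = d$ (respectively, ordered pairs $(x,y) \in \T_S^* \times \T_N^*$ with $x - y = d$), and the identity $(-y)-(-x)=x-y$ shows that $\phi$ always preserves the difference. What has to be verified in each of the two cases is that $\phi$ maps the relevant Cartesian product to itself, and then to compute the condition under which $\phi$ fixes a pair.

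In the case $p^r-1 \equiv 0 \pmod 4$, \autoref{prop:S_N_properties}(1) gives $-1 \in \T_S^*$, so $-\T_S^* = \T_S^*$ and $\phi$ maps $\T_S^* \times \T_S^*$ to itself. A fixed pair satisfies $y = -x$, and substituting into $d = x-y$ yields $d = 2x \in 2\T_S^*$. Contrapositively, if $d \notin 2\T_S^*$ then every representation has a distinct companion under $\phi$, so $N_d \geq 2$.

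In the case $p^r-1 \equiv 2 \pmod 4$, the same proposition gives $-1 \in \T_N^*$, whence $-\T_N^* = \T_S^*$ and $-\T_S^* = \T_N^*$; so $\phi$ sends $\T_S^* \times \T_N^*$ into itself. A fixed pair now satisfies $x = -y$, giving $d = -2y = 2(-y)$, and since $-y \in -\T_N^* = \T_S^*$, this again places $d$ in $2\T_S^*$. The conclusion follows exactly as in the previous case.

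The whole argument is driven by the symmetry $x \mapsto -x$, which fixes $\T_S^*$ in the first case and swaps $\T_S^*$ with $\T_N^*$ in the second; the fact that $\GR(p^2,r)$ has odd characteristic guarantees that $\phi$ is really an involution with no trivial reason to fix pairs. I expect no genuine obstacle here. The one point that deserves care is matching the fixed-point condition of $\phi$ to the statement ``$d \in 2\T_S^*$'' in each case, and this is precisely what \autoref{prop:S_N_properties}(1) makes possible.
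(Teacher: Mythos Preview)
Your proof is correct and is essentially the paper's own argument: the paper writes, for $d=s-s'$, the second representation $(-s')-(-s)=d$ and observes that the two coincide exactly when $s'=-s$, i.e.\ $d=2s\in 2\T_S^*$, invoking \autoref{prop:S_N_properties}(1) to see that $-\T_S^*=\T_S^*$ (resp.\ $-\T_N^*=\T_S^*$). Your framing via the involution $\phi(x,y)=(-y,-x)$ is just a tidy repackaging of the same idea.
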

\begin{proof}
	We prove the first result. The proof of the second statement is analogous. Let~$p$ be a prime and $r$ be a positive integer such that $p^r-1 \equiv 0 \pmod{4}$. Moreover, let $d \in \Delta \T_S^*$, which means that $d = s-s'$ is the difference of two distinct Teichmüller squares~$s,s' \in \T_S^*$. According to \autoref{prop:S_N_properties}, $\T_S^* = -\T_S^*$. Hence, if $s' \ne s$, then $(-s') -(-s) = d$ is a second representation of $d$ in $\Delta \T_S^*$. Note that all these differences occur in pairs. If $s' = -s$, however, the two representations are the same, and $d = 2s$, thus $d \in 2\T_S^*$. The statement follows.
\end{proof}

In the following lemma, we will establish an upper bound on the multiplicity of certain differences in $\Delta \T_S^*$ and $\T_S^* - \T_N^*$. For our main theorem, only the first part of the lemma is relevant. However, we also state the second part as it is easily obtained from the previous results.

\begin{lemma}
	\label{lem:upperbound}
	Let $p$ be an odd prime such that $2^{p-1} \not\equiv 1 \pmod{p^2}$. Consider the Galois ring $\GR(p^2,r)$, and denote by $\T_S^*$ the set of Teichmüller squares and by $\T_N^*$ the set of Teichmüller non-squares.
	\begin{itemize}
		\item If $p^r-1 \equiv 0 \pmod{24}$, then all differences $d \in \Delta \T_S^*$ where $d$ is a square have multiplicity $N_d < \frac{p^r-5}{4}$ in $\Delta \T_S^*$
		\item If $p^r-1 \equiv 18 \pmod{24}$, then all differences $d \in \T_S^*-\T_N^*$ have multiplicity $N_d < \frac{p^r+1}{4}$ in $\T_S^* - \T_N^*$.
	\end{itemize} 
\end{lemma}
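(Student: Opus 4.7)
The plan is to reduce the bound to a pigeonhole statement about the coset decomposition of the multiset $\Delta \T_S^*$ (respectively $\T_S^*-\T_N^*$). The key structural point is that multiplication by any $t \in \T_S^*$ induces a bijection of the multiset: the map $(s,s') \mapsto (ts,ts')$ preserves $\T_S^* \times \T_S^*$ and carries $s-s'$ to $t(s-s')$. Consequently $N_{td} = N_d$ for every $d$ and every $t \in \T_S^*$, so the multiplicity function is constant on each coset of $\T_S^*$ in $\GR(p^2,r)^*$. Writing $m_C$ for this common value on a coset $C$, the size information from \autoref{lem:number_of_SN_cosets} can be read as
\[
\sum_{C \text{ square coset}} m_C = \tfrac{p^r-5}{4}\qquad\text{(when $p^r-1\equiv 0\pmod 4$),}
\]
because each coset has $(p^r-1)/2$ elements and the square-coset portion of $\Delta\T_S^*$ contains $\tfrac{(p^r-5)(p^r-1)}{8}$ elements as a multiset.

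To prove the first bullet it therefore suffices to exhibit two \emph{distinct} square cosets $C_1, C_2$ with $m_{C_1}, m_{C_2} \geq 1$: any single coset then has multiplicity at most $\tfrac{p^r-5}{4}-1$, giving the strict bound. The natural choice is $C_1 = \T_S^*$ and $C_2 = 2\T_S^*$. Since $p^r-1 \equiv 0 \pmod{24}$ implies $p^r-1\equiv 0\pmod{12}$, \autoref{cor:mod24} gives $\T_S^*\subseteq \Delta\T_S^*$ and $2\T_S^*\subseteq \Delta\T_S^*$, and further tells us that $2\T_S^*$ is itself a square coset. The only remaining point is $\T_S^*\neq 2\T_S^*$, which amounts to $2\notin\T_S^*$. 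Since $p^r-1\equiv 0\pmod 8$ forces $2$ to be a square (\autoref{prop:S_N_properties}), $2\in\T_S^*$ is equivalent to $2\in\T^*$, and by the order argument in the proof of \autoref{lem:Ts=2Ts} this is in turn equivalent to $2^{p-1}\equiv 1\pmod{p^2}$; the Wieferich hypothesis excludes exactly this.

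The second bullet is proved by the same template with ``square coset'' replaced by ``non-square coset''. The hypothesis $p^r-1\equiv 18\pmod{24}$ gives $p^r-1\equiv 6\pmod{12}$, so by \autoref{cor:mod24} both $\T_N^*$ and $2\T_S^*$ are contained in $\T_S^*-\T_N^*$, and $2\T_S^*$ is a non-square coset; the non-Wieferich hypothesis gives $\T_N^*\neq 2\T_S^*$ by the same order argument, producing two distinct non-square cosets and hence $N_d < \tfrac{p^r+1}{4}$ for non-square $d$. For square $d$ the bound is immediate from \autoref{lem:number_of_SN_cosets}, since any square coset in $\T_S^*-\T_N^*$ then has multiplicity at most $\tfrac{p^r-3}{4}<\tfrac{p^r+1}{4}$, so no further argument is required.

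Once the $\T_S^*$-invariance of $N$ has been noted, the argument is pure bookkeeping; the only mildly delicate step is matching the modular condition ($\equiv 0$ or $\equiv 18 \pmod{24}$) with the coset \emph{type} of $2\T_S^*$, which is exactly what \autoref{cor:mod24} and \autoref{prop:S_N_properties} were set up to do. I do not foresee any substantive obstacle beyond verifying these bookkeeping steps.
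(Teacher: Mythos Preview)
Your proposal is correct and follows essentially the same route as the paper: both arguments use the coset count from \autoref{lem:number_of_SN_cosets} to bound $N_d$, invoke \autoref{cor:mod24} to produce the two candidate cosets $\T_S^*$ (resp.\ $\T_N^*$) and $2\T_S^*$, and use the non-Wieferich hypothesis via \autoref{lem:Ts=2Ts} to guarantee that these are distinct. The only differences are expository: you make the $\T_S^*$-invariance $N_{td}=N_d$ explicit (the paper leaves it implicit in the phrase ``contains $\frac{p^r-5}{4}$ square cosets''), and in the second bullet you explicitly dispose of the case where $d$ lies in a square coset, which the paper's proof handles only tacitly.
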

\begin{proof}
	The condition $2^{p-1} \not\equiv 1 \pmod{p^2}$ ensures that $2 \notin \T^*$, and consequently $\T_S^* \ne 2\T_S^* \ne \T_N^*$ as we showed in \autoref{lem:Ts=2Ts}. Assume $p^r-1 \equiv 0 \pmod{24}$, and let $d$ be the difference of two Teichmüller squares, $d \in \Delta \T_S^*$, such that~$d$ is a square. Denote by $N_d$ the multiplicity of $d$ in $\Delta \T_S^*$. From \autoref{lem:number_of_SN_cosets}, we know that $\Delta \T_S^*$ contains $\frac{1}{4}(p^r-5)$ not necessarily distinct square cosets of~$\T_S^*$. It follows that $N_d \le \frac{1}{4}(p^r-5)$, and $N_d = \frac{1}{4}(p^r-5)$ if and only if $\Delta \T_S^*$ contains only exactly one square coset of $\T_S^*$ with multiplicity $\frac{1}{4}(p^r-5)$. Assume $N_d = \frac{1}{4}(p^r-5)$. If $p^r-1 \equiv 0 \pmod{24}$, then, according to \autoref{cor:mod24}, both $\T_S^*$ and $2\T_S^*$ are subsets of $\Delta \T_S^*$, and $2\T_S^*$ is a square coset of $\T_S^*$. This is a contradiction.\par 
	
	Now, assume $p^r-1 \equiv 18 \pmod{24}$, and let $d$ be the difference of a Teichmüller square and a Teichmüller non-square, $d \in \T_S^* - \T_N^*$, such that $d$ is a non-square. Denote by $N_d$ the multiplicity of $d$ in $\T_S^* - \T_N^*$. Analogously to above, we conclude from \autoref{lem:number_of_SN_cosets} that $N_d \le \frac{1}{4}(p^r+1)$, and $N_d = \frac{1}{4}(p^r+1)$ if and only if $\T_S^* - \T_N^*$ contains only exactly one non-square coset of $\T_S^*$. Assume $N_d = \frac{1}{4}(p^r+1)$. If $p^r-1 \equiv 18 \pmod{24}$, then both $\T_N^*$ and $2\T_S^*$ are contained in $\T_S^* - \T_N^*$, and $2\T_S^*$ is a non-square coset of $\T_S^*$. Again, we obtain a contradiction.		
\end{proof}

As we have mentioned above, the multiplicity of a difference $d$ in $\Delta \T_S$ corresponds directly to the block intersection number $|\T_S^* \cap (\T_S^* \cap d)|$. Hence, we obtain from the previous lemmas the following theorem which is our main theorem.

\begin{theorem}
	\label{th:squares_noniso}
	Let $p$ be an odd prime such that $2^{p-1} \not\equiv 1 \pmod{p^2}$. Let $C^H$ be a $\left(p^{2r}, \frac{p^r-1}{2}, \frac{p^r-3}{2}\right)$ disjoint difference family in the additive group of the finite field $\F_{p^{2r}}$ constructed with \autoref{th:Fq_DDF}, and let $E^H$ be a disjoint difference family with the same parameters in the additive group of the Galois ring $\GR(p^2,r)$ constructed with \autoref{th:squares_GR_DDF}. If $p^r-1 \equiv 0 \pmod {24}$, the $2$-$\left(p^{2r}, \frac{p^r-1}{2}, \frac{p^r-3}{2}\right)$ designs $dev(E^H)$ and $dev(C^H)$ are nonisomorphic.
\end{theorem}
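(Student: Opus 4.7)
The plan is to show that $dev(E^H)$ possesses a block intersection number that does not occur among those of $dev(C^H)$; since block intersection numbers are isomorphism invariants, this suffices. Under the hypothesis $p^r - 1 \equiv 0 \pmod{24}$, which implies $p^r - 1 \equiv 0 \pmod 4$, Theorem \ref{th:intnums_Fp2r} gives the complete list of block intersection numbers of $dev(C^H)$ as
\[
\left\{0,\ 1,\ \tfrac{p^r-5}{4},\ \tfrac{p^r-1}{4}\right\}.
\]
My aim is therefore to exhibit a difference $d$ whose multiplicity $N_d$ in $\Delta \T_S^*$ is an integer lying strictly between $1$ and $\tfrac{p^r-5}{4}$.

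First, I would use the standard correspondence noted just before the theorem: for any $d$ appearing in $\Delta \T_S^*$, the multiplicity $N_d$ equals the block intersection number $|\T_S^* \cap (\T_S^* + d)|$, which is a block intersection number of $dev(E^H)$. The candidate I would pick is any element $d \in \T_S^*$, viewed as a difference in $\Delta \T_S^*$. By Corollary \ref{cor:mod24}, the hypothesis $p^r - 1 \equiv 0 \pmod{12}$ already guarantees $\T_S^* \subseteq \Delta \T_S^*$, so such a $d$ indeed occurs. Moreover, the hypothesis $2^{p-1} \not\equiv 1 \pmod{p^2}$ ensures via Lemma \ref{lem:Ts=2Ts} that $\T_S^* \ne 2\T_S^*$, and because distinct cosets of $\T_S^*$ are disjoint, every $d \in \T_S^*$ is then a Teichmüller square with $d \notin 2\T_S^*$.

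The bounds now follow directly from the preceding lemmas. Lemma \ref{lem:lowerbound} (first bullet, using $d \notin 2\T_S^*$) yields $N_d > 1$, while Lemma \ref{lem:upperbound} (first bullet, using that $d$ is a square and $p^r - 1 \equiv 0 \pmod{24}$) yields $N_d < \tfrac{p^r-5}{4}$. Hence
\[
1 \;<\; N_d \;<\; \tfrac{p^r-5}{4} \;<\; \tfrac{p^r-1}{4},
\]
so $N_d$ coincides with none of the four intersection numbers of $dev(C^H)$, and the nonisomorphism of $dev(E^H)$ and $dev(C^H)$ follows.

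The substantive technical work has been absorbed into the earlier lemmas; the role of this theorem is just to assemble them. The one residual check I would perform is that the open range $(1,\tfrac{p^r-5}{4})$ actually contains an integer, i.e.\ $\tfrac{p^r-5}{4} \ge 3$. This holds comfortably because $p^r - 1 \equiv 0 \pmod{24}$ forces $p^r \ge 25$, so no additional case analysis is required. The main conceptual obstacle, which is not encountered here but sits inside Lemma \ref{lem:upperbound}, was ruling out the extremal case $N_d = \tfrac{p^r-5}{4}$ via the extra structural information that $2\T_S^*$ is a second square coset of $\T_S^*$ in $\Delta \T_S^*$ distinct from $\T_S^*$.
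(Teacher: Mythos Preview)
Your proof is correct and follows essentially the same approach as the paper: you quote the intersection numbers of $dev(C^H)$ from \autoref{th:intnums_Fp2r} and then combine \autoref{lem:lowerbound} and \autoref{lem:upperbound} to produce a block intersection number of $dev(E^H)$ strictly between $1$ and $\tfrac{p^r-5}{4}$. You are in fact slightly more explicit than the paper in verifying that a suitable square $d \in \Delta\T_S^* \setminus 2\T_S^*$ actually exists (via \autoref{cor:mod24} and \autoref{lem:Ts=2Ts}) and that the open interval $(1,\tfrac{p^r-5}{4})$ contains an integer, both of which the paper leaves implicit.
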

\begin{proof}
	Let $p$ be an odd prime and $r$ be an integer such that $p^r-1 \equiv 0 \pmod{24}$. Recall from \autoref{th:intnums_Fp2r} that in this case the block intersection numbers of our design $dev(C^H)$ are given as $0,1,\frac{1}{4}(p^r-5), \frac{1}{4}(p^r-1)$. Now, consider the Galois ring $\GR(p^2,r)$, and denote by $\T_S^*$ the set of Teichmüller squares and by $\T_N^*$ the set of Teichmüller non-squares. By combining \autoref{lem:lowerbound} and \autoref{lem:upperbound}, we obtain
	\[
	1 < \left|\T_S^* \cap \left(\T_S^* + d \right) \right| < \frac{p^r-5}{4}
	\]
	for all squares $d \in \Delta \T_S^* \setminus 2\T_S^*$. Consequently, the design $dev(E^H)$ has an intersection number different from the ones of $dev(C^H)$, and the designs are nonisomorphic.
\end{proof}

We remark that $p^r-1 \equiv 0 \pmod{24}$ holds for all $p$ and $r$ where $p\ge5$ and $r$ is even. Furthermore, we remark that for the Wieferich primes $1093$ and $3511$, that satisfy $2^{p-1}\equiv 1 \pmod{p^2}$, the designs $dev(C^H)$ and $dev(E^H)$ are nonisomorphic for all~$r > 1$.  With the help of \texttt{Magma}~\cite{magma} we computed the multisets $\Delta \T_S^*$ and $\T_S^*-\T_N^*$ for $r=1$ and checked that these multisets contain more than one square coset and more than two non-square coset of~$\T_S^*$ each. Consequently, there will be at least as many square and non-square cosets of $\T_S^*$ in $\Delta \T_S^*$ and $\T_S^*-\T_N^*$, respectively, for~$r > 1$. So, the bounds on the respective block intersection numbers established in the previous proof hold.\par

To conclude this section, we give an example that demonstrates why our block intersection number approach fails if $p^r-1 \not \equiv 0 \pmod{24}$. We choose as an example the case $p^r-1 \equiv 18 \pmod{24}$ since, in the previous lemmas, we have already obtained several results about this case that followed immediately from the results for $p^r-1 \not \equiv 0 \pmod{24}$.

\begin{example}
	\label{ex:fail}
	Let $p$ and $r$ such that $p^r-1 \equiv 18 \pmod{24}$. In this case, according to \autoref{th:intnums_Fp2r}, the design~$dev(C^H)$, has block intersection numbers $0$, $1$, $\frac{1}{4}(p^r-3)$, $\frac{1}{4}(p^r+1)$. For the design $dev(E^H)$, using \autoref{lem:lowerbound} and \autoref{lem:upperbound}, we obtain
	\[
		1 < \left|\T_S^* \cap \left(\T_N^* + d \right) \right| < \frac{p^r+1}{4}
	\]
	for all $d \in (\T_S^*-\T_N^*) \setminus 2\T_S^*$. However, this result is of little use as it is still possible that there exists $d \in (\T_S^*-\T_N^*) \setminus 2\T_S^*$ such that $\left|\T_S^* \cap \left(\T_N^* + d \right)\right| = \frac{1}{4}(p^r-3)$ or that two completely different blocks intersect in $\frac{1}{4}(p^r+1)$ elements. In fact, the multiset $p\T_S^* - p\T_N^*$ contains $\frac{1}{4}(p^r+1)$ times the set $p\T_N^*$ and $\frac{1}{4}(p^r-3)$ times the set $p\T_S^*$. Hence, these two numbers actually occur as the block intersection numbers 
	\[
		\left|p\T_S^* \cap \left(p\T_N^* + d \right) \right|,
	\]
	where $d \in \I \setminus\{0\}$. Hence, we cannot show the existence of an intersection number $N$ such that $1 < N < \frac{1}{4}(p^r-3)$.
\end{example}

\section{Conclusion and open questions}
\label{sec:conclusion}
Motivated by the present authors' \cite{kaspers2019} recent results, we tried to use the same technique to solve another isomorphism problem about $2$-$(v,k,k-1)$ designs. Thanks to the algebraic structure of our designs, we were able to solve the problem for many cases, and, in doing so, obtained some interesting results about cyclotomic numbers and the structure of Galois rings of characteristic $p^2$. But the isomorphism problem is still not solved for all cases. \par
Our results demonstrate that using block intersection numbers as a method to tackle isomorphism problems about combinatorial designs has its limitations. One needs designs that have a sufficiently strong algebraic structure to calculate or even bound these numbers. We still consider this approach promising, especially if the designs are constructed as the developments of some difference structures. During our studies, we discovered the following interesting open problems:\par
\begin{itemize}
	\item Our computations hint that \autoref{th:squares_noniso} holds for all $p$ and $r$, where $p$ is odd. However, our examination of intersection numbers did not lead to the results necessary to prove this conjecture. We leave this task to future work.
	\item The construction of a disjoint difference family in $\GR(p^2,r)$ presented in \autoref{th:squares_GR_DDF} does not only work for the subgroup of squares in the Teichmüller group but for all its subgroups. Moreover, there will always be an analogue in $\F_{p^{2r}}$. It would be interesting to study the isomorphism problem for the associated designs in all these cases. It might be possible to deduce more block intersection numbers from the ones given in this paper and in \cite{kaspers2019}.
	\item As mentioned before, nonisomorphic designs can have the same block intersection numbers. It would be interesting to find more difference families as in \autoref{rem:feng} for which their associated designs have the same intersection numbers but are still nonisomorphic.
\end{itemize}

\printbibliography
\end{document}